\theoremstyle{plain}
\newtheorem{thm}{Theorem}[section]
\newtheorem{lem}[thm]{Lemma}
\newtheorem{prop}[thm]{Proposition}
\newtheorem{cor}[thm]{Corollary}
\theoremstyle{definition}
\newtheorem{defi}[thm]{Definition}
\theoremstyle{remark}
\newtheorem{rem}[thm]{Remark}
\newtheorem{eg}[thm]{Example}
\newcommand{\R}{\mathbb{R}}
\newcommand{\Z}{\mathbb{Z}}
\newcommand{\Q}{\mathbb{Q}}
\newcommand{\C}{\mathbb{C}}
\newcommand{\Symp}{\mathop{\mathrm{Symp}}\nolimits}
\newcommand{\Spec}{\mathop{\mathrm{Spec}}\nolimits}
\newcommand{\Ham}{\mathop{\mathrm{Ham}}\nolimits}
\newcommand{\Crit}{\mathop{\mathrm{Crit}}\nolimits}
\newcommand{\CZ}{\mathop{\mathrm{CZ\mathchar`-ind}}\nolimits}
\newcommand{\dist}{\mathop{\mathrm{dist}}\nolimits}
\newcommand{\Sp}{\mathop{\mathrm{Sp}}\nolimits}
\newcommand{\Id}{\mathop{\mathrm{Id}}\nolimits}
\newcommand{\sign}{\mathop{\mathrm{sign}}\nolimits}
\newcommand{\Ker}{\mathop{\mathrm{Ker}}\nolimits}
\newcommand{\Hom}{\mathop{\mathrm{Hom}}\nolimits}
\newcommand{\Fix}{\mathop{\mathrm{Fix}}\nolimits}
\newcommand{\supp}{\mathop{\mathrm{supp}}\nolimits}
\newcommand{\signn}{\mathop{\widetilde{\mathrm{sign}}}\nolimits}
\newcommand{\rank}{\mathop{\mathrm{rank}}\nolimits}
\newcommand{\gl}{\mathop{\mathfrak{gl}}\nolimits}
\newcommand{\tocong}{\stackrel{\cong}{\to}}
\title{Spectral invariants of distance functions}
\author{Suguru Ishikawa}
\date{October 15, 2015}
\begin{document}

\maketitle

\begin{abstract}
Calculating the spectral invariant of Floer homology of the distance function, we can
find new superheavy subsets in symplectic manifolds.
We show if convex open subsets in Euclidian space with the standard symplectic form are
disjointly embedded in a spherically negative monotone closed symplectic manifold,
their compliment is superheavy.
In particular, the $S^1$ bouquet in a closed Riemann surface with genus $g\geq 1$ is superheavy.
We also prove some analogous properties of a monotone closed symplectic manifold.
These can be used to extend Seyfaddni's result
about lower bounds of Poisson bracket invariant.
\end{abstract}

\section{Introduction and main results}
In \cite{EP09}, Entov and Polterovich introduced the notion of a (super)heavy set of
a symplectic manifold, which enabled them to find a lot of examples of non-displaceable sets
in symplectic manifolds.
Heavy sets cannot be displaced by Hamiltonian isotopy, and
superheavy sets cannot be displaced by symplectic isotopy.
An important fact proved in \cite{EP09} is that (super)heaviness is
preserved by product, which produces a lot of examples of superhevy sets.

Superheavyness of a closed subset of a symplectic manifold is equivalent to
the vanishing of the partial symplectic quasi-state of the distance function
from that subset (see Section \ref{review}).
Hence we can detect superheavy subsets by the direct calculation of
the partial symplectic quasi-states of the special Hamiltonians.

We show in this paper the following theorem.
\begin{thm}\label{main}
Let $(M,\omega)$ be a closed symplectic manifold with dimension $2n$ and
assume $c_1(TM) = \kappa \omega$ on $\pi _2(M)$, $\kappa \leq 0$.
If convex open subsets $U_j \subset (\R^{2n},\omega_0=\sum_i dx_i\wedge dy_i)$ are
symplectically embedded in $M$, then $X=M\setminus \coprod_{j}U_j$ is superheavy
with respect to every non-zero idempotent of $QH(M,\omega)$.

\end{thm}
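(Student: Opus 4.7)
My plan is to reformulate superheaviness in terms of the partial symplectic quasi-state $\zeta_\alpha$ and then to compute it via Floer theory on the convex domains. The standard characterization of $\alpha$-superheaviness is $\zeta_\alpha(H) \geq \inf_X H$ for every $H \in C^\infty(M)$, where $\zeta_\alpha(H) = \lim_{k\to\infty} c(kH,\alpha)/k$. Using monotonicity of $\zeta_\alpha$ and replacing $H$ by (a smoothing of) $\min(H - \inf_X H, 0)$, the problem reduces to showing $\zeta_\alpha(H) = 0$ for every $H \leq 0$ with $\supp H \subset \coprod_j U_j$. Writing $H = \sum_j H_j$ with $\supp H_j \subset U_j$, the summands have pairwise disjoint supports and hence Poisson-commute, so the quasi-state property yields $\zeta_\alpha(H) = \sum_j \zeta_\alpha(H_j)$. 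It therefore suffices to treat a single convex open $U \subset \R^{2n}$ symplectically embedded in $M$.

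For such a $U$ the key claim I aim to establish is
\[
c(kH, \alpha) = O(1)\quad\text{as } k \to \infty,
\]
which immediately gives $\zeta_\alpha(H) = 0$. I would prove this by analyzing the $1$-periodic orbits of a non-degenerate perturbation of $kH$: these split into (A) constant orbits in $X$, each of action $0$, and (B) orbits contained in $U$. On $U$ the symplectic form $\omega_0$ admits the Liouville primitive $\lambda = \tfrac12 \sum_i (x_i\,dy_i - y_i\,dx_i)$, and convexity of $U$ together with classical Ekeland-Hofer / Hofer-Zehnder type capacity estimates control the actions of type-(B) orbits by the geometry of $U$ and the $C^0$-size of $H$, uniformly in $k$. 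The hypothesis $c_1(TM) = \kappa\omega$ with $\kappa \leq 0$ enters to control the Floer moduli: for any non-constant $J$-holomorphic sphere $u$ one has $\kappa \int u^*\omega = \int u^*c_1$, so the sign of $\kappa$ constrains how sphere bubbling can change action.

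With these ingredients the plan is to build a chain-level comparison between $CF^*(kH)$ and the Morse complex of a small Morse function on $M$, in which the idempotent $\alpha$ is realized by a cycle supported on type-(A) generators, hence at action $0$. The main obstacle, as I see it, is exactly this comparison: type-(B) orbits can have action of order $-k\sup|H|$, and one must rule out that any non-zero idempotent class is represented at such action levels. I expect this to require an action-filtration argument producing an acyclic subcomplex from the type-(B) orbits in the relevant window, together with careful Conley-Zehnder index bookkeeping for iterated orbits. The hypothesis $\kappa \leq 0$ appears essential at both points: to sign-control sphere bubbles, and to make the indices of iterated orbits inside $U$ consistent with their (increasingly negative) actions so that the idempotent class is forced onto the constant part coming from $X$.
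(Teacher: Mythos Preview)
Your outline has the right high-level target (bound the spectral invariant by analyzing periodic orbits inside the convex pieces), but there are real gaps.

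First, a sign/direction issue. In the paper's conventions $\alpha$-superheaviness is $\zeta(\alpha;H)\le\max_X H$; the inequality you wrote is $\alpha$-heaviness. Accordingly the reduction should be to $H\ge 0$ supported in $\coprod_j U_j$, and one needs an \emph{upper} bound $c(\alpha;kH)=O(1)$. Your description of type-(B) orbits at action of order $-k\sup|H|$, and of ruling out that $\alpha$ is represented there, is a \emph{lower}-bound argument, i.e.\ the heavy direction. Also, partial symplectic quasi-states satisfy only $\zeta_\alpha(\sum_j H_j)\le\sum_j\zeta_\alpha(H_j)$ on Poisson-commuting families, not equality; subadditivity would suffice once the direction is corrected, but the equality you assert is false in general.

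Second, and more importantly, you misidentify the mechanism and the role of $\kappa\le 0$. The paper's proof (Proposition~\ref{bound}) uses no moduli-space or bubbling analysis and no chain-level comparison; it is a pure spectrality argument. One takes model Hamiltonians $H^s=\sum_j\chi^s\circ(f_j)_\delta$, where $f_j$ is the degree-two homogeneous gauge of $U_j$. Strict convexity gives $D^2f_j>0$, and the Comparison Lemma (Lemma~\ref{comparison}) together with the reparametrization bound (Corollary~\ref{parameter}) force $\max\CZ<-n$, i.e.\ degree $>n$, for every nonconstant orbit inside $U_j$. For $\kappa=0$ this already means such orbits never contribute to $\Spec_n$, so $c([M];H^s)=c([M];H^0)\le\max H^0$ by spectrality and Lipschitz continuity. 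For $\kappa<0$ one can recap such an orbit to degree $n$, but only by a class $A$ with $c_1(A)<0$, hence $\omega(A)=c_1(A)/\kappa>0$; the action therefore \emph{increases} past $\max H^0$, and again the relevant part of $\Spec_n$ is independent of $s$. Either way one gets $c([M];F)\le\max_j C_0(U_j)$ for every $F$ supported in $\coprod_j U_j$, and superheaviness follows from Lemma~\ref{dist}. So the essential engine is the Conley--Zehnder index estimate coming from convexity, combined with the purely algebraic recapping relation $(\mu,\mathcal{A})\mapsto(\mu+2c_1(A),\mathcal{A}+\omega(A))$ --- not an Ekeland--Hofer action bound, not sphere-bubble control, and not a filtered-acyclicity argument for a subcomplex of $CF$.
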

The proof of the above theorem is based on the estimate of the Conley-Zehnder index
of periodic orbits of the distance-like function.

In the above theorem, the assumption about $c_1$ is necessary.
Indeed, the complex projective space $(\C P^n,\tau_0)$ with the Fubini-Study form is an easy counterexample since $\C P^{n-1} = \C P^n\setminus B(1) \subset \C P^n$ is not superheavy (see Example \ref{counter}). However, we can show the analogous statement for a monotone closed symplectic manifold $(M, \omega)$ if $U_j$ are sufficiently small (but not necessarily displaceable by Hamiltonian isotopy).

The above theorem is a corollary of Proposition \ref{bound} proved in Section \ref{proof}.
Seyfaddini pointed out that our Proposition \ref{bound} is a generalization of Theorem 2 of his paper \cite{Sey14}. In particular, the dispalceability assumption in his theorem can be removed in the case of spherically negative monotone symplectic manifold.

Seyfaddini and Polterovich pointed out that Theorem 4.8 of \cite{Pol14} and
its extension Theorem 9 of \cite{Sey14} can
be farther extended by our Proposition \ref{bound}. (See section \ref{application}.)
The method used to prove Proposition \ref{bound} can be used to get some bound of the Hofer-Zehnder capacity of some open subset of a monotone closed symplectic manifold. (See section \ref{another}.)

\section{Review}
\label{review}
For a closed symplectic manifold $(M,\omega)$, let $\Omega_0M$ be the connected component
of the loop space of $M$, and
$\widetilde{\Omega_0M}=\{(x,u); x\in \Omega _0M, u:D\to M, u|_{\partial D}=x\}/\sim$
be its standard covering space,
where $(x,u)\sim (y,v)$ if and only if $x=y$, $c_1(u\# \bar{v})=0$ and 
$\omega(u\# \bar{v})=0$ ($c_1$ stands for $c_1(TM)$).
$\Gamma =\pi _2 (M)/\Ker \omega \cap \Ker c_1$ acts on $\widetilde{\Omega_0M}$ by
$[x,u]\cdot A = [x, (-A)\# u]$.
Every Hamiltonian $H:M\times S^1\to \R$ defines the action functional
$\mathcal{A}_H:\widetilde{\Omega_0M}\to\R$ by
\[
\mathcal{A}_H([x,u])=-\int_Du^\ast \omega + \int _0^1H(x(t),t)dt.
\]
The set of its critical points is $\Crit \mathcal{A}_H=\{[x,u]\in\widetilde{\Omega_0M};x(t)=\phi^H_tx(0)$
is a contractible periodic orbit$\}$,
where $\phi^H_t$ denotes the flow of the Hamiltonian vector field $X_H$ of $H$.
$X_H$ is defined by $i_{X_H}\omega = -dH$.
We note that $\mathcal{A}_H([x,u]\cdot A) =\mathcal{A}_H([x,u]) + \omega (A)$.

For each $[x,u]\in \Crit \mathcal{A}_H$, $\CZ _H([x,u]) \in \frac{1}{2}\Z$ denotes
the Conley-Zehnder index of the linearized flow $\{(\phi^H_t)_{\ast x(0)}:T_xM\to T_{\phi^H_t(x(0))}M\}_{t\in[0,1]}$
under the symplectic trivialization of $x^\ast TM$ given by $u$
(for Conley Zehender index, see \cite{CZ83} for nondegenerate orbits
and \cite{RS93} for general ones. See also Section \ref{CZ}.)
If it is nondegenerate, then its Conley-Zehnder index is an integer.
In this case we define its degree as $\mu ([x,u])=-\CZ([x,u])$.
We note that $\mu ([x,u]\cdot A) = \mu ([x,u]) + 2c_1(A)$.
For a nondegenerate Hamiltonian $H:M\times S^1\to \R$, we define
\begin{equation}
\Spec _kH=\{\mathcal{A}_H([x,u]); [x,u]\in \Crit \mathcal{A}_H, \mu([x,u])=k \},\label{spec_k}
\end{equation}
which is a countable subset of $\R$.

For a non-zero element $\alpha\in QH_{2n}(M,\omega)$ of its quantum homology group,
we have a spectral invariant
$c(\alpha;-):C(M\times S^1;\R)\to \R$, which satisfies the following properties:

\begin{itemize}
\item normalization:
\[
c(\alpha;0)=\nu(\alpha):=\inf_{\alpha=\sum_A \lambda_Ae^A}\max_{\lambda_A\neq 0} \{\omega (A)\}
\]

\item shift property:
$c(\alpha; F + a) = c(\alpha; F) + \int ^1_0 a(t)dt$ for $a : S^1 \to \R$.

\item monotonicity:
for $F,G\in C(M\times S^1;\R)$,
\[
\int_0^1\min_{M}(F_t-G_t)dt\leq c(\alpha;F)-c(\alpha;G) \leq \int_0^1\max_{M}(F_t-G_t)dt.
\]

The monotonicity property also implies Lipschitz continuity of the spectral invariant.

\item spectrality: for a nondegenerate Hamiltonian $H\in C(M\times S^1;\R)$, $c(\alpha;H) \in \Spec_{n} H$.

\item triangle inequality:
for $F,G\in C(M\times S^1;\R)$ and $\alpha, \beta \in QH_{2n}(M,\omega)$ such that $\alpha \beta \neq 0$,
\[
c(\alpha\beta;F\# G)\leq c(\alpha;F)+c(\beta;G),
\]
where $(F\# G)_t=F_t+G_t\circ (\phi ^F_t)^{-1}$.
In particular,
\[
c(\alpha;F) \leq c(\alpha; 0) + c([M]; F).
\]

\item symplectic invariance: for $\theta \in \Symp (M;\omega)$, $c(\alpha;\theta^\ast H)=c(\theta_\ast \alpha;H)$,
where $(\theta^\ast H)_t=H_t\circ \theta$.

\item homotopy invariance: if $F,G\in C(M\times S^1;\R)$ satisfies
\begin{align*}
\phi ^F_1=\phi ^G_1 \text{ in } \widetilde{\Ham}(M,\omega), \quad \langle F \rangle := \int ^1_0 dt \int_M H_t\omega^{\wedge n}= \langle G \rangle
\end{align*}
then $c(\alpha;F)=c(\alpha;G)$.

\end{itemize}
for details, see \cite{Sch00} for symplectically aspherical manifolds and \cite{Oh05} for general ones.

For a non-zero idempotent $\alpha \in QH_{2n}(M,\omega)$,
Entov and Polterovich \cite{EP03} defined 
the homogenization of the spectral invariant $\bar{c}(\alpha;-): C(M\times S^1;\R)\to \R$ by
\[
\bar{c}(\alpha;H)=\lim_{k\to \infty}\frac{1}{k}c(\alpha;kH_{kt})
\]
and they defined in \cite{EP06} partial symplectic quasi-state
$\zeta(\alpha;-): C(M;\R)\to \R$
\[
\zeta(\alpha;H)=\lim_{k\to \infty}\frac{1}{k}c(\alpha;kH)=\bar{c}(\alpha;H).
\]
They proved in \cite{EP07} the following lemma.
\begin{lem}[\cite{EP07}]
For $F,G\in C(M\times S^1;\R)$,
\[
\int_0^1\min_{M}(F_t-G_t)dt\leq \bar{c}(\alpha;F)-\bar{c}(\alpha;G) \leq \int_0^1\max_{M}(F_t-G_t)dt.
\]
In particular, for $F,G\in C(M;\R)$,
\[
\min_{M}(F-G)\leq \zeta(\alpha;F)-\zeta(\alpha;G) \leq \max_{M}(F-G).
\]
\end{lem}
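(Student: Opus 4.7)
The plan is to deduce this directly from the monotonicity property of the usual spectral invariant $c(\alpha;-)$ together with the definition $\bar{c}(\alpha;H)=\lim_{k\to\infty}\frac{1}{k}c(\alpha;kH_{kt})$. Since $\bar c$ is built from $c$ by a simple averaging/rescaling procedure, the inequalities for $c$ should pass to $\bar c$ essentially unchanged.

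Concretely, I would first apply the monotonicity property to the rescaled Hamiltonians $kF_{kt}$ and $kG_{kt}$ to obtain
\[
\int_0^1 \min_M\bigl(kF_{kt}-kG_{kt}\bigr)\,dt \;\leq\; c(\alpha;kF_{kt})-c(\alpha;kG_{kt}) \;\leq\; \int_0^1 \max_M\bigl(kF_{kt}-kG_{kt}\bigr)\,dt.
\]
Next I would change variables $s=kt$ on the outer integrals and use the $S^1$-periodicity of $F$ and $G$ in time to rewrite, e.g.,
\[
\int_0^1 \max_M\bigl(kF_{kt}-kG_{kt}\bigr)\,dt = \int_0^k \max_M(F_s-G_s)\,ds = k\int_0^1 \max_M(F_t-G_t)\,dt,
\]
and similarly for the minimum. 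Dividing the whole chain of inequalities by $k$ and passing to the limit $k\to\infty$ then yields the first claim
\[
\int_0^1\min_M(F_t-G_t)\,dt\leq \bar c(\alpha;F)-\bar c(\alpha;G)\leq \int_0^1\max_M(F_t-G_t)\,dt.
\]

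The second, time-independent statement is just the specialization $F_t\equiv F$, $G_t\equiv G$, since then $\zeta(\alpha;F)=\bar c(\alpha;F)$ by the definition given just above the lemma, and the integrals collapse to pointwise max and min on $M$.

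No step is really an obstacle here: there is no Floer-theoretic analysis to redo, since everything is extracted from the already-established monotonicity of $c$. The only mild care needed is in the change of variables and in checking that the periodicity convention on $F_{kt}$ is the standard one (i.e.\ $F_{kt}$ means $F_{kt\bmod 1}$, so that $kF_{kt}$ generates the $k$-th iterate of $\phi^F$), so that the rescaling identity used above is correct.
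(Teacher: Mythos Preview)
Your argument is correct and is the standard derivation: monotonicity of $c$ applied to the reparametrized Hamiltonians $kF_{kt}$ and $kG_{kt}$, the change of variables $s=kt$ together with $1$-periodicity in time, division by $k$, and passage to the limit. The time-independent case is indeed just a specialization.

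There is nothing to compare against in the paper itself: the lemma is stated with a citation to \cite{EP07} and no proof is given. Your proof is exactly the expected one and would be what a reader reconstructs from the cited reference.
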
 

Other properties of the partial symplectic quasi-states which were proved in \cite{EP06} are
as follows: ($\zeta(-)$ is short for $\zeta(\alpha; -)$ below.)
\begin{itemize}
\item
$\zeta(a)=a$, $\zeta(H+a)=\zeta(H)+a$ for all $a\in\R$.

\item
$\zeta (aH)=a\zeta (H)$ for all $a\geq 0$.

\item
$\zeta (-H)+\zeta(H) \geq 0$.

\item
If $\supp H \subset M$ is stably displaceable then $\zeta(H)=0$.

\item
If $F,G\in C^1(M;\R)$ and $\{F,G\}=0$ then $\zeta(F+G)\leq \zeta(F)+\zeta(G)$.

\end{itemize}

\begin{defi}[\cite{EP09}]
A closed subset $X\subset M$ is $\alpha$-heavy if
\begin{align*} 
\zeta(\alpha;H)\geq \min_X H \text{ for all } H\in C(M;\R)
\end{align*}
and is $\alpha$-superheavy if
\begin{align*}
\zeta(\alpha;H)\leq \max_X H\text{ for all } H\in C(M;\R).
\end{align*}
\end{defi}
They proved the following important properties.
\begin{prop}[\cite{EP09}]\label{heavysuperheavy}
If $X \subset M $ is $\alpha$-superheavy and $Y \subset M$ is $\alpha$-heavy,
then $X \cap Y\neq \emptyset$.
\end{prop}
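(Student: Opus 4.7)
The plan is to argue by contradiction, exploiting the fact that the two inequalities defining heaviness and superheaviness point in opposite directions, and then engineer a single Hamiltonian on which they clash.

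First I would assume, toward a contradiction, that $X \cap Y = \emptyset$. Since $X$ and $Y$ are disjoint closed subsets of the compact (hence normal) manifold $M$, Urysohn's lemma produces a continuous function $H \in C(M;\R)$ with $H \equiv 0$ on $X$ and $H \equiv 1$ on $Y$ (and, say, $0 \leq H \leq 1$ everywhere, though this is not needed). In particular $\max_X H = 0$ while $\min_Y H = 1$.

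Next I would feed this $H$ into both defining inequalities. The $\alpha$-superheaviness of $X$ yields
\[
\zeta(\alpha;H) \leq \max_X H = 0,
\]
while the $\alpha$-heaviness of $Y$ yields
\[
\zeta(\alpha;H) \geq \min_Y H = 1.
\]
Combining these gives $1 \leq \zeta(\alpha;H) \leq 0$, which is the desired contradiction, so $X \cap Y \neq \emptyset$.

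There is essentially no obstacle here: the whole content is the observation that heaviness and superheaviness furnish matching one-sided bounds on $\zeta(\alpha;\cdot)$, and disjointness of closed sets allows one to construct a separating function that realises incompatible bounds. The only prerequisite beyond the definitions is a standard separation property of the underlying topological space, which is automatic for a closed manifold.
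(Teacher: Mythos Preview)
Your argument is correct and is exactly the standard proof from \cite{EP09}. Note that the present paper does not supply its own proof of this proposition; it simply quotes the result from \cite{EP09}, so there is nothing further to compare.
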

\begin{prop}[\cite{EP09}]
$\alpha$-heavy set $X\subset M$ is stably non-displaceable.
\end{prop}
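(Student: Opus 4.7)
The plan is a direct argument by contradiction that extracts a test function $H$ whose value $\zeta(\alpha;H)$ is forced to be both $0$ (by the stable displaceability hypothesis) and at least $1$ (by $\alpha$-heaviness).

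Concretely, suppose that $X$ is stably displaceable. Since $X$ is closed in the closed manifold $M$ (hence compact), a standard compactness argument upgrades the hypothesized displacement of $X$ to the displacement of a small open neighborhood; that is, there exists an open $U \supset X$ which is itself stably displaceable in $M$. Here I use only that the displacing Hamiltonian isotopy is continuous and that $X$ is compact, so the image of $X$ (in the stabilized manifold) stays a positive distance from $X$.

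Now I would choose any $H \in C(M;\R)$ with $0 \le H \le 1$, $H \equiv 1$ on $X$, and $\supp H \subset U$ (for instance via Urysohn, or a smooth bump). Since $\supp H \subset U$ is stably displaceable, the bullet-point property listed above gives $\zeta(\alpha;H) = 0$. On the other hand, $\alpha$-heaviness of $X$ forces $\zeta(\alpha;H) \ge \min_X H = 1$, which is the sought contradiction.

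The only point that requires any thought is the passage from stable displaceability of the closed set $X$ to stable displaceability of an open neighborhood, a routine compactness step; the rest of the argument is a direct application of the properties of $\zeta(\alpha;\cdot)$ already recorded in the excerpt. There is no real obstacle here — the whole content of the proposition is that the vanishing clause and the heaviness clause are incompatible unless $X$ is stably non-displaceable.
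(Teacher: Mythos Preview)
The paper does not actually prove this proposition; it is stated with a citation to \cite{EP09} and no proof is given in the present paper. Your argument is correct and is essentially the standard one from that reference: the vanishing property of $\zeta(\alpha;\cdot)$ on functions with stably displaceable support is directly incompatible with the lower bound $\zeta(\alpha;H)\ge\min_X H$ coming from $\alpha$-heaviness, once one chooses a bump function equal to $1$ on $X$ and supported in a stably displaceable neighborhood.
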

\begin{prop}[\cite{EP09}]
$\alpha$-superheavy set is $\alpha$-heavy.
Hence $\alpha$-superheavy set cannot be displaced by
$\phi \in \Symp (M,\omega)$ such that $\phi^\ast \alpha =\alpha$.
\end{prop}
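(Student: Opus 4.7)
The proposition asserts two things: that superheavy implies heavy, and that an $\alpha$-superheavy set is preserved by any symplectomorphism fixing $\alpha$. Both claims should follow formally from the properties of $\zeta$ and $c$ already listed, together with Proposition \ref{heavysuperheavy}, with essentially no analytical input.

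For the first claim, my plan is to apply the superheaviness inequality to $-H$: since $X$ is $\alpha$-superheavy, $\zeta(\alpha;-H)\leq\max_X(-H)=-\min_X H$. Combining this with the subadditivity $\zeta(\alpha;H)+\zeta(\alpha;-H)\geq 0$ listed among the properties of $\zeta$, one immediately gets $\zeta(\alpha;H)\geq -\zeta(\alpha;-H)\geq \min_X H$, which is exactly the $\alpha$-heavy inequality.

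For the second claim, suppose $\phi\in\Symp(M,\omega)$ satisfies $\phi^\ast\alpha=\alpha$. The plan is to first show that $\phi(X)$ is again $\alpha$-superheavy, and then invoke Proposition \ref{heavysuperheavy} for the pair $(X,\phi(X))$ to conclude $X\cap\phi(X)\neq\emptyset$. The key ingredient is symplectic invariance $c(\alpha;\theta^\ast H)=c(\theta_\ast\alpha;H)$; passing through the definition of the homogenization $\zeta=\bar{c}$ and using that $\phi^\ast\alpha=\alpha$ is equivalent to $\phi_\ast\alpha=\alpha$, this gives $\zeta(\alpha;\phi^\ast H)=\zeta(\alpha;H)$ for any $H$. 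Then superheaviness of $X$ applied to $\phi^\ast H$ reads
\[
\zeta(\alpha;H)=\zeta(\alpha;\phi^\ast H)\leq \max_X(H\circ\phi)=\max_{\phi(X)}H,
\]
so $\phi(X)$ is $\alpha$-superheavy. By the first part it is also $\alpha$-heavy, and Proposition \ref{heavysuperheavy} then forces $X\cap\phi(X)\neq\emptyset$.

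The whole argument is a short manipulation of already-stated properties. The only point requiring a bit of care is bookkeeping the pullback/pushforward conventions so that $\phi^\ast\alpha=\alpha$ really transports from the spectral-invariant symplectic-invariance formula to the homogenized version; once that is settled, there is nothing left to do.
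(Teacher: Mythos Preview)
The paper does not supply a proof of this proposition; it is simply quoted from \cite{EP09} without argument. Your proof is correct and is in fact the standard one found in \cite{EP09}: the implication superheavy $\Rightarrow$ heavy follows from applying the superheavy inequality to $-H$ together with the listed property $\zeta(H)+\zeta(-H)\geq 0$, and the non-displaceability follows by transporting superheaviness along $\phi$ via the symplectic invariance of $c$ (hence of $\zeta$) and then invoking Proposition~\ref{heavysuperheavy}. The bookkeeping point you flag---that $\phi^\ast\alpha=\alpha$ is equivalent to $\phi_\ast\alpha=\alpha$ since these are inverse automorphisms of $QH_\ast$---is the only thing to check, and it is immediate.
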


The following lemma is almost the same as that of \cite{EP09}.
\begin{lem}\label{dist}
For a closed subset $X\subset M$, the following conditions are equivalent:
\begin{enumerate}[\normalfont(i)]
\item $X$ is $\alpha$-heavy.
\item if $H\leq 0$ and $H|_X=0$, then $\zeta (\alpha ;H)=0$.
\item there exists some $H_0\in C(M;\R)$ such that $H_0\leq 0$, $\{H_0=0\}=X$ and $\zeta(\alpha;H_0)=0$.
$($$\{H_0=0\}$ is short for  $\{ x\in M ; H_0(x)=0\}$.$)$

\end{enumerate}
The following conditions are also equivalent:
\begin{enumerate}[\normalfont(i)$'$]
\item $X$ is $\alpha$-superheavy.
\item if $H\geq 0$ and $H|_X=0$, then $\zeta (\alpha ;H)=0$.
\item there exists some $H_0\in C(M;\R)$ such that $H_0\geq 0$, $\{H_0=0\}=X$ and $\zeta(\alpha;H_0)=0$.

\end{enumerate}
\end{lem}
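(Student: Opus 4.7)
The plan is to prove both sets of equivalences by the cycle (i) $\Rightarrow$ (ii) $\Rightarrow$ (iii) $\Rightarrow$ (i), with the superheavy case following from the heavy one by reversing the signs of $H$, $H_0$, and all inequalities. Every step uses only the formal properties of $\zeta(\alpha;\cdot)$ that are already in hand: normalization $\zeta(\alpha;0)=0$, the shift property, positive homogeneity, and the two-sided monotonicity from the lemma of \cite{EP07}.

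For (i) $\Rightarrow$ (ii), I would note that if $H\leq 0$ with $H|_X=0$ then $\min_X H = 0$, so heaviness directly gives $\zeta(\alpha;H)\geq 0$, while monotonicity applied to $H$ and $G=0$ combined with $H\leq 0$ forces $\zeta(\alpha;H)\leq 0$. Step (ii) $\Rightarrow$ (iii) is immediate: since $X$ is closed, $H_0=-\dist(\cdot,X)$ is a continuous non-positive function with zero set exactly $X$, and applying (ii) to it yields $\zeta(\alpha;H_0)=0$.

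The only step with content is (iii) $\Rightarrow$ (i). Given $H_0$ as in (iii) and an arbitrary $H\in C(M;\R)$, I would set $c=\min_X H$ and $F=H-c$, so that $F|_X\geq 0$. Fixing $\epsilon>0$, the open set $U_\epsilon=\{F>-\epsilon\}$ contains the closed set $X$, and on its compact complement $H_0$ attains a strictly negative maximum $-\delta<0$. For $\lambda>0$ large enough I get $\lambda H_0\leq F+\epsilon$ on all of $M$ (on $U_\epsilon$ via $\lambda H_0\leq 0\leq F+\epsilon$, on the complement via $\lambda H_0\leq -\lambda\delta\leq \min_M F+\epsilon$). Then monotonicity and positive homogeneity give $0=\zeta(\alpha;\lambda H_0)\leq \zeta(\alpha;F)+\epsilon$; letting $\epsilon\to 0$ and applying the shift property produces $\zeta(\alpha;H)=\zeta(\alpha;F)+c\geq c=\min_X H$, which is (i).

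The superheavy half is proved identically, taking $H_0=\dist(\cdot,X)$ and $F=H-\max_X H$, and comparing $\lambda H_0\geq F-\epsilon$ instead. There is no substantive obstacle here; the only point that requires care is keeping track of the direction of the monotonicity inequalities when swapping the heavy and superheavy cases.
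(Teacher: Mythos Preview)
Your argument is correct and uses the same key idea as the paper: dominating the given function by a large scalar multiple of $H_0$ up to an $\epsilon$-error, then invoking positive homogeneity and monotonicity of $\zeta$. The only difference is organizational: the paper cites \cite{EP09} for (i)\,$\Leftrightarrow$\,(ii) and closes the loop via (iii)\,$\Rightarrow$\,(ii) (which is slightly shorter since one already has $H\leq 0$ and $H|_X=0$), whereas you give a self-contained cycle going (iii)\,$\Rightarrow$\,(i) directly.
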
	
\begin{proof}
The equivalence of (i) and (ii) was proved in \cite{EP09}. (ii) trivially implies (iii).
If (iii) is satisfied, then for any $H\leq 0$ with $H|_X=0$ and $\epsilon >0$,
there exists some $a\geq0$ such that $H\geq aH_0-\epsilon$.
Hence $0\geq\zeta(\alpha;H)\geq a\zeta(\alpha;H_0)-\epsilon=-\epsilon$. 
The equivalence of (i)$'$ to (iii)$'$ is similar.
\end{proof}
\begin{rem}
In the above conditions (iii) and (iii)$'$, we can take $H_0(x)=\pm \dist(x,X)$.
Hence we can prove the superheavyness of a given closed subset
by the analysis of a single function. This trivial observation is crucial in this paper.
\end{rem}

\begin{prop}[\cite{EP09}]\label{product}
Let $\alpha \in QH_{2n}(M,\omega)$ and $\alpha'\in QH_{2n}(M',\omega')$ be non-zero idempotents,
and $H\in C(M\times S^1;\R)$ and $H' \in C(M'\times S^1;\R)$ be Hamiltonians.
Then we have
\begin{equation*}
c(\alpha \otimes \alpha';H+H')=c(\alpha;H)+c(\alpha';H').
\end{equation*}
Hence if $X\subset M$ is $\alpha$-(super)heavy and $X'\subset M'$ is $\alpha'$-(super)heavy,
then $X\times X'\subset M\times M'$ is also $\alpha \otimes \alpha'$-(super)heavy.
\end{prop}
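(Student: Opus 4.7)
The plan is to split the statement into two parts: first, establish the product formula $c(\alpha \otimes \alpha'; H + H') = c(\alpha; H) + c(\alpha'; H')$ for spectral invariants of split Hamiltonians, which carries all the geometric content; second, deduce the (super)heaviness of $X \times X'$ as a short formal consequence via Lemma \ref{dist}.

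For the product formula I would work on $(M \times M', \omega \oplus \omega')$ with the split Hamiltonian $F(x, y, t) = H(x, t) + H'(y, t)$. The flow factors as $\phi_t^F = \phi_t^H \times \phi_t^{H'}$, so contractible $1$-periodic orbits of $F$ are exactly pairs of contractible orbits of $H$ and $H'$. Since $\pi_2(M \times M') \cong \pi_2(M) \oplus \pi_2(M')$, and both $\omega \oplus \omega'$ and $c_1(T(M \times M'))$ split componentwise, a capping of $(x, y)$ may be represented by a pair $(u, u')$, and both the action functional and the Conley--Zehnder index become additive under this decomposition. Choosing a split almost complex structure $J \oplus J'$, the Floer equation separates, so trajectories between split orbits decompose as pairs. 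This identifies the action-filtered Floer complex of $F$ with the tensor product of those of $H$ and $H'$, and on passing to the canonical isomorphism with quantum homology one obtains $QH(M \times M') \cong QH(M) \otimes QH(M')$ as filtered tensor products. The spectral number of $\alpha \otimes \alpha'$, being the infimum of action levels over representing cycles, then inherits additivity, giving the desired formula.

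For the (super)heaviness consequence I would first homogenize: applying the product formula to $k(H + H')$ and dividing by $k$ yields $\zeta(\alpha \otimes \alpha'; H + H') = \zeta(\alpha; H) + \zeta(\alpha'; H')$ for time-independent split Hamiltonians. Now suppose $X$ is $\alpha$-superheavy and $X'$ is $\alpha'$-superheavy, and set $H_0(x, y) = \dist(x, X) + \dist(y, X')$, which is nonnegative and vanishes exactly on $X \times X'$. Lemma \ref{dist} applied to each factor gives $\zeta(\alpha; \dist(\cdot, X)) = \zeta(\alpha'; \dist(\cdot, X')) = 0$, hence $\zeta(\alpha \otimes \alpha'; H_0) = 0$, and a second application of Lemma \ref{dist} on $M \times M'$ shows that $X \times X'$ is $\alpha \otimes \alpha'$-superheavy. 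The heavy case is entirely analogous, with $-\dist(\cdot, X) - \dist(\cdot, X')$.

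The main obstacle is the regularity of split Floer data: a split $J \oplus J'$ is typically not regular on the product even when $J$ and $J'$ are, so the clean tensor-product decomposition of moduli spaces above cannot be taken at face value. The standard workaround combines the Lipschitz continuity of $c$ in the Hamiltonian with a small generic perturbation of $J \oplus J'$; one computes with nearby nondegenerate/regular data, uses Gromov-type compactness to control low-action configurations, and passes to the limit, checking that the action filtration is preserved up to arbitrarily small error.
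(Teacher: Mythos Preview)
The paper does not supply its own proof of this proposition; it is quoted from \cite{EP09}. Your sketch of the product formula follows the standard Floer-theoretic argument from that reference, and you correctly flag the one genuine technical issue (regularity of split $J\oplus J'$) together with the usual perturbation workaround. For the (super)heaviness consequence, the paper does add a remark immediately after the proposition that the argument in \cite{EP09} can be simplified via Lemma~\ref{dist} and the observation that $\dist((x,x'),X\times X')=\dist(x,X)+\dist(x',X')$ for suitable metrics; your second part is exactly this simplification, so your approach coincides with what the paper advocates.
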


We remark that the proof of the last claim of the above proposition in \cite{EP09} can be simplified
by using the fact that $\dist((x,x'),X\times X')=\dist(x,X)+\dist(x',X')$
for apropriate distance functions on $M$, $M'$ and $M\times M'$.
The following lemma is also related to this remark.
\begin{lem}\label{com}
Suppose $f_1, f_2, \cdots f_m:M\to \R$ are pairwise Poisson commuting functions
and $\{f_j=0\}\subset M$ $(j=1,2,\dots , m)$ are all $\alpha$-superheavy.
Then their intersection $\bigcap \{ f_j=0\}\subset M$ is also $\alpha$-superheavy.
\end{lem}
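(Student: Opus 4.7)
The strategy is to invoke the characterization of $\alpha$-superheavyness in Lemma \ref{dist}\,(iii)$'$: to show that $\bigcap_j \{f_j = 0\}$ is $\alpha$-superheavy, it suffices to exhibit a single nonnegative function $H_0$ whose zero set equals $\bigcap_j \{f_j = 0\}$ and for which $\zeta(\alpha; H_0) = 0$. The natural candidate is
\[
H_0 := \sum_{j=1}^m f_j^2,
\]
which is manifestly nonnegative and has the desired zero locus.

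The task then reduces to verifying the two inequalities $\zeta(\alpha; H_0) \geq 0$ and $\zeta(\alpha; H_0) \leq 0$. The lower bound is essentially automatic from the Lipschitz/monotonicity property of $\zeta$: applied with the constant function $0$ and using $\zeta(\alpha;0)=0$, one gets $\zeta(\alpha; H_0) \geq \min_M H_0 \geq 0$.

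The upper bound uses both hypotheses. For each $j$ separately, the $\alpha$-superheavyness of $\{f_j = 0\}$ combined with the implication (i)$'$$\Rightarrow$(ii)$'$ of Lemma \ref{dist}, applied to the nonnegative function $f_j^2$ (which vanishes on $\{f_j = 0\}$), yields $\zeta(\alpha; f_j^2) = 0$. Next, the pairwise Poisson commutativity of the $f_j$ lifts to the $f_j^2$ via the Leibniz rule $\{f_j^2, f_k^2\} = 4 f_j f_k \{f_j, f_k\} = 0$, so the listed subadditivity property of $\zeta$ for Poisson-commuting functions can be applied iteratively to give
\[
\zeta(\alpha; H_0) \leq \sum_{j=1}^m \zeta(\alpha; f_j^2) = 0.
\]
Combining the two bounds gives $\zeta(\alpha; H_0) = 0$, and Lemma \ref{dist}\,(iii)$'$$\Rightarrow$(i)$'$ then concludes.

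There is no serious obstacle here; the only slightly clever ingredient is the choice $H_0 = \sum f_j^2$, which simultaneously encodes the zero-set information of all the $f_j$ while preserving pairwise Poisson commutativity, so that both halves of Lemma \ref{dist} and the commuting-subadditivity property of $\zeta$ apply in tandem.
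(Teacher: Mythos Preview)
Your proof is correct and follows essentially the same approach as the paper: the paper also takes $H_0=\sum_j f_j^2$, notes that the $f_j^2$ remain pairwise Poisson commuting, applies the subadditivity of $\zeta$ to obtain $\zeta(\alpha;\sum_j f_j^2)\leq\sum_j\zeta(\alpha;f_j^2)=0$, and then invokes Lemma~\ref{dist}. Your write-up is simply a more detailed version of the same argument, making explicit the Leibniz-rule computation and the lower bound $\zeta(\alpha;H_0)\geq 0$.
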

\begin{proof}
Since $\{f_j^2\}$ are also pairwise Poisson commuting,
\[
\zeta(\alpha;\sum_j f_j^2)\leq \sum_j \zeta(\alpha;f_j^2)=0.
\]
This implies the claim by Lemma \ref{dist}.
\end{proof}
The following lemma is also useful.
\begin{lem}\label{geq0}
If $F\in C(M)$ vanishes on some $\alpha$-heavy subset $X\subset M$,
then $c(\alpha;F)\geq 0$.
\end{lem}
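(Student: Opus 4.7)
The plan is to sandwich $c(\alpha; F)$ from below by the homogenized spectral invariant $\zeta(\alpha; F) = \bar{c}(\alpha; F)$ and then exploit heaviness of $X$ to show that the latter is nonnegative.

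First I would use heaviness directly: since $F|_X \equiv 0$, the defining inequality of $\alpha$-heavy applied to $H = F$ gives
\[
\zeta(\alpha; F) \;\geq\; \min_X F \;=\; 0,
\]
so it remains to prove the inequality $c(\alpha; F) \geq \zeta(\alpha; F)$.

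For this, I would exploit that $F$ is autonomous, hence preserved by its own Hamiltonian flow, so that $(F \# F)_t = F + F \circ (\phi^F_t)^{-1} = 2F$, and inductively $F \# \cdots \# F = kF$ for every integer $k \geq 1$. Since $\alpha$ is a nonzero idempotent, $\alpha^k = \alpha \neq 0$, so iterating the triangle inequality yields $c(\alpha; kF) \leq k\, c(\alpha; F)$ for all $k$. Dividing by $k$ and sending $k \to \infty$, the definition $\zeta(\alpha; F) = \lim_k \tfrac{1}{k} c(\alpha; kF)$ gives $\zeta(\alpha; F) \leq c(\alpha; F)$.

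Combining the two steps, $c(\alpha; F) \geq \zeta(\alpha; F) \geq 0$. There is no serious obstacle here; the only thing to be careful about is that autonomy of $F$ is essential in order for $F \# F = 2F$ to hold, which in turn is what allows the triangle inequality for the idempotent $\alpha$ to iterate cleanly into the bound $c(\alpha; kF) \leq k\, c(\alpha; F)$.
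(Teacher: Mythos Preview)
Your proof is correct and follows the same route as the paper: the paper's one-line argument ``$c(\alpha;F)\geq \zeta(\alpha;F)\geq \min_X F\geq 0$'' invokes exactly the triangle inequality (with $\alpha$ idempotent and $F$ autonomous so that $c(\alpha;kF)\leq k\,c(\alpha;F)$) together with the heaviness inequality. You have simply unpacked the first step in more detail.
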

\begin{proof}
By the triangle inequality of the spectral invariant,
\[
c(\alpha; F)\geq \zeta(\alpha ; F) \geq \min_X F \geq 0.
\]
\end{proof}

\section{Estimates of the Conley-Zehnder index}\label{CZ}
We prove some properties of the Conley-Zehnder index of a path in $\Sp(2n)$.
The Conley-Zehnder index was introduced in \cite{CZ83} for a nondegenerate path,
and Robbin and Salamon generalized it for every path in \cite{RS93}.
First we describe two equivalent definitions of the index along the same line
as in \cite{RS93} and \cite{RS95}(for details, see these articles).

The first definition is related to the spectral flow of the associated selfadjoint operator. 
First we note that for any path $\Phi(t)$ in $\Sp(2n)$ with $\Phi(0) = \Id$,
there exists a path of symmetric matrices $S(t) : [0,1] \to \gl(2n;\R)$
such that $\Phi$ is the fundamental solution of the equation
$\dot{x}=J_0S(t)x$, that is,
\[
\partial _t \Phi(t) = J_0S(t) \Phi(t), \ \Phi(0) = \Id.
\]
Let $\{S^s(t):[0,1]\to \gl(2n;\R)\}_{s\in[0,1]}$ be a family of paths of symmetric matrices
and $\{\Phi^s(t):[0,1]\to \Sp(2n)\}_{s\in[0,1]}$ be the fundamental solutions of the equations $\dot{x}=J_0S^s(t)x$.
Define a family of selfadjoint operators by
$A(s)=J_0\partial_t+S^s(t):W^{1,2}(S^1;\R^{2n})\to L^2(S^1;\R^{2n})$.
$s\in[0,1]$ is called a crossing if $A(s)$ has a nontrivial kernel. In that case
we define its crossing operator $\Gamma(s,A):N(A(s))\to N(A(s))$ by
\[
\Gamma(s,A)\xi=\pi_{N(A(s))}(\partial_sS^s\xi),
\]
where $\pi_{N(A(s))}$ denotes the orthogonal projection to the kernel $N(A(s))$ of $A(s)$.
We say a crossing $s\in[0,1]$ is regular if its crossing operator is invertible.
We can perturb the paths $S^s(t)$ with fixed endpoints $s=0,1$ to make every crossing regular.
Then the difference of the Conley-Zehnder index of $\{\Phi^0(t)\}_{t\in[0,1]}$ and $\{\Phi^1(t)\}_{t\in[0,1]}$ is
\[
\CZ\Phi^1-\CZ\Phi^0=\frac{1}{2}\sign\Gamma(0,A)+\sum_{0<s<1}\sign\Gamma(s,A)+\frac{1}{2}\sign\Gamma(1,A),
\]
where $\sign$ denotes the signature (the number of positive eigenvalues minus the number of negative eigenvalues).
We will see later that the right hand side of the above equation only depends on $\{\Phi^0(t)\}_{t\in[0,1]}$ and $\{\Phi^1(t)\}_{t\in[0,1]}$.
We normalize $\CZ\Id=0$. Then we have a well-defined map $\CZ:\{\Phi(t):[0,1]\to\Sp(2n);\Phi(0)=\Id\}\to\frac{1}{2}\Z$.

The second definition uses the Maslov index of the path of Lagrangian subspaces.
First we recall its definition.
Fix a Lagrangian subspace $L_0\in \mathcal{L}=\{ L \subset (V,\omega);$ Lagranian$\}$ of a symplectic vector space $(V,\omega)$.
For a smooth path of Lagrangian subspaces $\alpha : [0,1]\to \mathcal{L}$,
$t\in[0,1]$ is called a crossing if
$\alpha(t) \in \Sigma_{L_0}=\{L\in\mathcal{L};L\cap L_0\neq 0\}$.
In this case we define the quadratic form on $\alpha(t)\cap L_0$ by 
$Q_t=\omega_0\circ (1\times (\partial_t \tilde{\alpha}(t) \circ \tilde{\alpha}(t)^{-1})):(\alpha(t)\cap L_0)\times(\alpha(t)\cap L_0)\to \R$,
where $\tilde{\alpha}$ is a path of Lagrangian frame
$\tilde{\alpha} : [0,1]\to \Hom_{\R} (\R^n ; V)$ such that $\alpha(t) = \mathrm{Im} \tilde{\alpha}(t)$. (The definition of $Q_t$ dose not depend on the choice of $\tilde{\alpha}$.)
We say a crossing $t\in[0,1]$ is regular if this quadratic form is nondegenerate.
We can perturb the path $\alpha$ with fixed endpoints $t=0,1$ to make every crossing regular.
Then the Maslov index of the path $\alpha$ is
\[
\mu_{L_0}(\alpha)=\frac{1}{2}\sign Q_0+\sum_{0<t<1}\sign Q_t+\frac{1}{2}\sign Q_1\ \in\frac{1}{2}\Z.
\]
This definition is independent of the perturbation.
The Maslov index satisfies the following properties:

\begin{itemize}
\item
homotopy invariance:
$\mu(\alpha)\in\frac{1}{2}\Z$ is determined by the fixed-endpoint homotopy class of $\alpha$.

\item
catenation:
If $\alpha,\beta:[0,1]\to \mathcal{L}$ satisfies $\alpha(1)=\beta(0)$, then
$\mu_{L_0}(\alpha\vee\beta)=\mu_{L_0}(\alpha)+\mu_{L_0}(\beta)$.

\item
direct sum:
For Lagrangian subspaces $L_0\subset(V,\omega)$ and $L'_0\subset(V',\omega')$, 
$\mu_{L_0\oplus L'_0}(\alpha\oplus\beta)=\mu_{L_0}(\alpha)+\mu_{L'_0}(\beta)$.

\item
symplectic invariance:
$\mu_{\Phi(L_0)}(\Phi\circ\alpha)=\mu_{L_0}(\alpha)$
for any linear symplectic isomorphism $\Phi:(V,\omega)\tocong (V',\omega')$.
\item
localization:
to state this property, we need some definitions.
Assume $V= L_0 \oplus L_1$, where $L_1$ is another Lagrangian.
Then it is easy to see that
$(V,\omega)=L_0\oplus L_1{\stackrel{1\times F}{\cong}}(L_0\oplus L_0^\ast,\omega_{L_0})$,
where
\[
\omega_{L_0}((x,\xi),(y,\eta))=\eta(x)-\xi(y)
\]
and
\[
F:L_1\tocong L_0^\ast \ v\mapsto -i_v\omega.
\]
It is also easy to see that every Lagrangian $L$ such that $L\cap L_1 = 0$
can be written as
\[
L=\mathrm{graph}_{L_0}^{L_1}q:=(1\times F)^{-1}\{(x,qx)\in L_0 \oplus L_0^\ast ; x\in L_0\},
\]
where $q\in S^2L_0^\ast$ is a quadratic form on $L_0$.

Localization property asserts that
if $\alpha$ has the form
\[
\alpha(t)=\mathrm{graph}_{L_0}^{L_1}q(t),
\]
where $q(t) : [0,1] \to S^2L_0^\ast $ is a path of quadratic forms on $L_0$, then
\[
\mu_{L_0}(\alpha )=\frac{1}{2}\{ \sign q(1)-\sign q(0)\}.
\]

\item
pseudo-continuity:
If $\beta:[0,1]\to\mathcal{L}$ is sufficiently close to $\alpha$ in $C([0,1];\mathcal{L})$, then
\[
|\mu_{L_0}(\beta)-\mu_{L_0}(\alpha)|\leq\frac{1}{2}(\dim(\alpha(0)\cap L_0)+\dim(\alpha(1)\cap L_0)).
\]
Moreover, if $\alpha(0)= \beta(0)$, then
\[
|\mu_{L_0}(\beta)-\mu_{L_0}(\alpha)|\leq\frac{1}{2}\dim(\alpha(1)\cap L_0).
\]
\end{itemize}

All the properties other than pseudo-continuity were proved in \cite{RS93}.
Pseudo-continuity can be proved as follows.
If $\beta$ is sufficiently close to $\alpha$ then 
there exists a homotopy $\alpha^s$ ($s\in [0,1]$) between $\alpha$ and $\beta$ such that 
under some decomposition $(V,\omega) = L_0 \oplus L_1$,
$\alpha^s(0) = \mathrm{graph}_{L_0}^{L_1} q_0(s)$ and 
$\alpha^s(1) = \mathrm{graph}_{L_0}^{L_1} q_1(s)$
for some quadratic forms $q_0(s)$ and $q_1(s)$.
Then by the properties of homotopy invariance, catenation, and localization imply
\begin{align*}
|\mu_{L_0}(\beta)-\mu_{L_0}(\alpha)|
&= |\mu _{L_0}^{L_1}(\alpha^s(1))_{s\in[0,1]} -  \mu _{L_0}^{L_1}(\alpha^s(0))_{s\in[0,1]}|\\
&= |\frac{1}{2}\{ ( \sign q_1(1) - \sign q_1(0) )  - ( \sign q_0(1) - \sign q_0(0) )\}| \\
&\leq \frac{1}{2}(\dim (\alpha (0)\cap L_0)+\dim(\alpha (1) \cap L_0))
\end{align*}
if $\beta(t)$ are sufficiently close to $\alpha(t)$. ($t=0,1$)

Using this index, we define for a path $\Phi(t):[0,1]\to \Sp(2n)$ with $\Phi(0)=\Id$,
\[
\CZ\Phi=\mu_\Delta((1\times \Phi(t))\Delta)\in\frac{1}{2}\Z,
\]
where $\Delta \subset (\R^{2n}\times \R^{2n},(-\omega_0)\oplus\omega_0)$ is
the diagonal Lagrangian and $(1\times \Phi(t))\Delta$ is the graph of $\Phi (t)$.
Note that pseudo-continuity of the Maslov index implies that of the Conley-Zhender index,
hence if $\Phi'$ is sufficiently close to $\Phi$ then
\[
|\CZ \Phi' -\CZ \Phi | \leq \frac{1}{2}\dim \Ker (\Phi (1)-\Id).
\]

The equivalence of the above two definitions was proved in Theirem 7.1 of \cite{RS95}.
Here is a sketch of its proof for reader's convenience. This contains the proof of
the well-definedness of the first definition. 

Let $\{S^s(t):[0,1]\to \gl(2n;\R)\}_{s\in[0,1]}$ be a family of paths of symmetric matrices
and $\{\Phi^s(t):[0,1]\to \Sp(2n)\}_{s\in[0,1]}$ be the fundamental solutions of the equations $\dot{x}=J_0S^s(t)x$.
Let $s_0 \in [0,1]$ be a regular crossing.
Then the quadratic form associated to the crossing operator is
\[
\xi  \mapsto \int^1_0 \langle \xi(t), \partial_s S^{s_0}(t) \xi(t) \rangle dt : \Ker (J_0\partial_t + S^{s_0}) \to \R.
\]
$\xi \in \Ker (J_0\partial_t + S^{s_0})$ can be written as $\xi(t) = \Phi^{s_0}(t) v$ with some $v\in \Ker(\Phi^{s_0}(1) -1)$.
Define $\xi(s,t) = \Phi^s(t) v$ and observe $\xi$ satisfies
\begin{gather*}
J_0 \partial_t \xi + S \xi = 0 \\
J_0\partial_s\partial_t\xi + \partial_s S \xi + S \partial_s \xi = 0.
\end{gather*}
Hence
\begin{align*}
\langle \xi, \partial_s S \xi \rangle
&=\langle J_0\xi, \partial_s \partial_t \xi \rangle - \langle \xi , S\partial_s \xi \rangle \\
&= \partial _t \langle J_0 \xi , \partial _s \xi \rangle,
\end{align*}
which implies that
\[
\int^1_0 \langle \xi(t), \partial_s S^{s_0}(t) \xi(t) \rangle dt 
= \langle J_0 \xi(s_0,1), \partial _s\xi(s_0,1) \rangle
= \langle J_0 \Phi^{s_0}(1)v, \partial _s\Phi^{s_0}(1)v \rangle.
\]
This coincides with the quadratic form associated with crossing of $(1\times \Phi^{s_0}(1))\Delta$
with $\Delta$.
Hence,
\begin{align*}
&\frac{1}{2}\sign\Gamma(0,A)+\sum_{0<s<1}\sign\Gamma(s,A)+\frac{1}{2}\sign\Gamma(1,A)\\
&= \mu_{\Delta}(1\times \Phi^s(1))_{s\in[0,1]}\\
&= \mu_{\Delta}(1\times \Phi^1(t))_{t\in[0,1]} - \mu_{\Delta}(1\times \Phi^0(t))_{t\in[0,1]}
\end{align*}
by catenation property and homotopy invariance of the Maslov index.
This proves the well-definedness of the first definition and the equivalence of the two definition.

\begin{defi}\label{max}
\[
\max \CZ \Phi = \CZ \Phi + \frac{1}{2}\mathrm{dim}\Ker (\Phi (1) - \Id )
\]
\end{defi}
\begin{rem}\label{maxrem}
For any small $\epsilon >0$, $\max \CZ \Phi = \CZ \{ e^{\epsilon J_0t}\Phi(t) \}_{t\in[0,1]}$. \end{rem}

The following lemma is classical in the theory of differential equations and
would be well known among the experts.
\begin{lem}[The Comparison Lemma]\label{comparison}
Let $S^0(t),S^1(t):[0,1]\to \gl(2n;\R)$ be two paths of symmetric matrices
and $\Phi^0(t),\Phi^1(0):[0,1]\to \Sp(2n)$ be the fundamental solutions of the equations $\dot{x}=J_0S^0(t)x$, $\dot{x}=J_0S^1(t)x$ respectively.
If $S^0(t)\leq S^1(t)$ for all $t\in[0,1]$, then $\CZ\Phi^0\leq \CZ\Phi^1$.
In particular,
\begin{enumerate}[\normalfont(i)]
\item if $S(t)\leq -C$ for all $t\in [0,1]$,where $C\in \R$ is a constant, then $\max \CZ\Phi\leq -n-2n\bigl[\frac{C}{2\pi}\bigr]^<$, where $[x]^<$ stands for the largest integer less than $x$.
\item if $S(t)\leq -C|_{V\times V} \oplus -\epsilon|_{V^{\bot}\times V^{\bot}}$ for all $t\in [0,1]$, where $C\in \R$ and $\epsilon >0$ are constants, $V \subset \R ^{2n}$ is a complex subspace and $V^{\bot}\subset \R^{2n}$ denotes its orthogonal compliment, then $\max \CZ \Phi \leq -n-2\mathrm{dim}_{\C}V\bigl[\frac{C}{2\pi}\bigr]^<$.
\end{enumerate}

\end{lem}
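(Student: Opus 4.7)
\emph{Main inequality.} My plan is to use the spectral-flow formulation of the Conley-Zehnder index. I take the linear interpolation $S^s(t) = (1-s) S^0(t) + s S^1(t)$, with fundamental solutions $\Phi^s$, and form $A(s) = J_0 \partial_t + S^s$. At any crossing $s_0 \in [0,1]$, each $\xi \in N(A(s_0))$ has the form $\xi(t) = \Phi^{s_0}(t) v$ for some $v \in \Ker(\Phi^{s_0}(1) - \Id)$, so the quadratic form of the crossing operator $\Gamma(s_0, A)$ reads $\xi \mapsto \int_0^1 \langle \xi(t), (S^1(t) - S^0(t)) \xi(t)\rangle\, dt$. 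This is positive semidefinite by hypothesis, so every signature appearing in the spectral-flow formula is nonnegative and one obtains $\CZ\Phi^1 - \CZ\Phi^0 \geq 0$.

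\emph{Main obstacle.} The spectral-flow formula presupposes that all crossings of the homotopy are regular, whereas a generic endpoint-fixing perturbation of $S^s$ may render the crossing form indefinite. The natural fix is a one-sided shift: first replace $S^1$ by $S^1 + \delta I$ for small $\delta>0$, so that $\partial_s S^s = (S^1 - S^0) + \delta I$ is strictly positive definite, every crossing becomes automatically regular, and every signature is strictly positive. Combining this with a symmetric downward shift of $S^0$ and passing $\delta \to 0^+$, using pseudo-continuity of the Maslov index to control the limit, recovers the inequality for the unperturbed flows.

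\emph{Deducing (i).} Set $k = [C/2\pi]^<$ and fix $\eta > 0$ small enough that $2\pi k + 2\eta < C$. By Remark \ref{maxrem}, $\max \CZ \Phi = \CZ \Psi$ for $\Psi(t) = e^{\eta J_0 t} \Phi(t)$; this $\Psi$ solves $\dot \Psi = J_0 \tilde S(t) \Psi$ with $\tilde S(t) = \eta I + e^{\eta J_0 t} S(t) e^{-\eta J_0 t}$. Orthogonal conjugation by $e^{\eta J_0 t}$ preserves the ordering of symmetric matrices and fixes $-C I$, so $\tilde S(t) \leq -(C - \eta) I \leq -(2\pi k + \eta) I$. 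Applying the main inequality against the reference path $-(2\pi k + \eta) I$, whose fundamental solution $\Phi_{\mathrm{ref}}(t) = e^{-(2\pi k + \eta) J_0 t}$ is nondegenerate with $\CZ$ equal to $-(2k+1)$ in each of the $n$ complex directions (total $-n - 2nk$), gives $\max \CZ \Phi = \CZ \Psi \leq -n - 2n[C/2\pi]^<$.

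\emph{Deducing (ii).} The same argument applies with a block-diagonal reference. Since $V$ is a complex subspace, orthogonal conjugation by $e^{\eta J_0 t}$ preserves the decomposition $\R^{2n} = V \oplus V^\perp$, and for a suitably small auxiliary $\eta > 0$ the reference flow splits as $e^{-(2\pi k + \eta) J_0 t}|_V \oplus e^{-(\epsilon - \eta) J_0 t}|_{V^\perp}$. By the direct-sum property of the Maslov index its $\CZ$ equals $-(2k+1)\dim_\C V - \dim_\C V^\perp = -n - 2 \dim_\C V \cdot k$, matching the claimed bound.
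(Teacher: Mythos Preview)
Your approach to the main inequality is exactly the paper's: interpolate linearly and note that the crossing form $\xi\mapsto\int_0^1\langle\xi,(S^1-S^0)\xi\rangle\,dt$ is nonnegative. You go further than the paper by flagging the regularity issue, which the paper simply ignores. However, your proposed fix via a $\delta$-shift and pseudo-continuity does not recover the sharp inequality $\CZ\Phi^0\le\CZ\Phi^1$. Shifting $S^1\mapsto S^1+\delta I$ does give $\CZ\Phi^0\le\CZ\Phi^1_{+\delta}$ with regular crossings, but the homotopy from $S^1$ to $S^1+\delta I$ itself has strictly positive crossing form $\delta I$, so for small generic $\delta$ one has $\CZ\Phi^1_{+\delta}=\CZ\Phi^1+\tfrac12\dim\Ker(\Phi^1(1)-\Id)=\max\CZ\Phi^1$, not $\CZ\Phi^1$; pseudo-continuity only bounds $|\CZ\Phi^1_{+\delta}-\CZ\Phi^1|$ by this same half-integer and cannot pin down the sign. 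Thus you obtain only $\CZ\Phi^0\le\max\CZ\Phi^1$ (and, by the symmetric downward shift, $\min\CZ\Phi^0\le\CZ\Phi^1$). A genuine fix uses that the kernel of the crossing form is a single \emph{persistent} subspace independent of $s$: if $\int\langle\xi,(S^1-S^0)\xi\rangle=0$ then $(S^1(t)-S^0(t))\xi(t)\equiv 0$, hence $\xi\in\Ker A(s)$ for every $s$; one may then reduce symplectically by this subspace so that the remaining crossings are positive definite and the formula applies.

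This gap does not affect your proofs of (i) and (ii): your reference paths $e^{-(2\pi k+\eta)J_0 t}$ (and the block version) are nondegenerate, so $\max\CZ\Phi_{\mathrm{ref}}=\CZ\Phi_{\mathrm{ref}}$ and even the weaker comparison suffices. Your route there---apply Remark~\ref{maxrem} first to write $\max\CZ\Phi=\CZ\Psi$, compute the conjugated coefficient $\tilde S=\eta I+e^{\eta J_0 t}Se^{-\eta J_0 t}$, then compare against an explicit nondegenerate reference---is a correct alternative to the paper's route, which instead compares $S$ to $-CI$ directly, computes $\CZ\{e^{-J_0Ct}\}\le -n-2n[C/2\pi]^<$, and only afterwards upgrades to $\max\CZ$ via upper semicontinuity of the right-hand side in $C$ together with Remark~\ref{maxrem}.
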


\begin{proof}
This is clear by the first definition since if we take $S^s(t)=(1-s)S^0(t)+sS^1(t)$ then the quadraic form associated with its crossing operator is
\[
(\Gamma(s,A)\xi,\xi)_{L^2}=\int_0^1\langle\xi,\partial_sS^s(t)\xi\rangle dt\qquad \xi\in N(A(s)),
\]
which is obviously nonnegative definite.
In the case (i),
\[
\CZ \{ e^{-J_0Ct}\} _{t \in [0,1]} \leq -n -2n \biggl[ \frac{C}{2\pi} \biggr]^<
\]
implies $\CZ \Phi \leq  -n -2n \bigl[ \frac{C}{2\pi} \bigr]^<$.
Since the right-hand side is upper semi-continuous with respect to $C$, Remark \ref{maxrem} implies the claim.
(ii) can be proved similarly.
\end{proof}

\begin{defi}

\begin{align*}
\Xi & =\{X\in \mathfrak{sp}(2n);\mathrm{Im} X\subset(\R^{2n},\omega_0) \mathrm{\ is\ isotropic}\}\\
     & =\{X\in \gl (2n;\R);\forall t\in \R \ 1+tX\in \Sp(2n)\}
\end{align*}

\end{defi}

\begin{rem}\label{parameterrem}
For a time-independent Hamiltonian $H:M\to \R$ and a smooth function
$\chi :\R \to \R$,
\[
(\phi ^{\chi \circ H}_t)_{\ast x}\xi = (\phi^H _{\chi' (H(x)) \cdot t})_{ \ast x}(\xi + t\chi''(H(x))\omega (\xi,X_H(x))X_H(x)):T_xM\to T_{\phi_t^{\chi \circ H}(x)}M.
\]
In this formula,
\[
X: \xi \mapsto \chi''(H(x))\omega (\xi,X_H(x))X_H(x) : T_xM \to T_xM
\]
belongs to $\Xi$.
\end{rem}

\begin{rem}\label{momentrem}
This remark will be used in the proof of Proposition \ref{caltorus}.
Consider a Hamiltonian $T^k$-action on a symplectic manifold $(M,\omega)$. Let $\mu : M \to \mathfrak{t}^\ast$ be its momentum map.
(We use the convention that for $Y\in \mathfrak{t}$, $Y\circ \mu : M\to \R$ is the Hamiltonian of the fundamental vector field $\underline{Y}$ of $Y$. ($\underline{Y}_x= \frac{d}{dt}e^{tY} \cdot x|_{t=0}$.))
For a smooth function $f : \mathfrak{t}^\ast \to \R$, we regard its differential as $df : \mathfrak{t}^\ast \to \mathfrak{t}$.

Consider the Hamiltonian $H= f\circ \mu : M \to \R$.
It is easy to see
\begin{gather*}
\phi^H_t(x) = e^{tdf(\mu (x))}\cdot x\ :M\to M\\
(\phi^H_t)_{\ast x} \xi =(e^{tdf(\mu (x))})_{\ast x} \bigl(\xi + t\underline{(df\circ \mu)_{\ast}\xi}_x \bigr)\ :T_xM \to T_{\phi^H_t(x)}M.
\end{gather*}
In this formula,
\[
X : \xi \mapsto \underline{(df\circ \mu)_{\ast}\xi}_x \ :T_xM \to T_xM
\]
belongs to $\Xi$.

\end{rem}

\begin{lem}\label{time}
For $\Phi(t):[0,1]\to \Sp(2n)$ with $\Phi(0)=\Id$ and $X\in \Xi$,
\begin{align*}
|\CZ (\Phi(t)\circ (1+tX))-\CZ (\Phi(t))|&\leq \rank X\\
|\max \CZ (\Phi(t)\circ (1+tX))-\max \CZ (\Phi(t))|&\leq \rank X\\
\max \CZ (\Phi(t)\circ (1+ tX)) - \max \CZ (\Phi(t)) \qquad&\\
\leq \rank X - \bigl( \rank (\Phi(1)\circ(1 + X)-\Id) &- \rank (\Phi(1) -\Id) \bigr)
\end{align*}
\end{lem}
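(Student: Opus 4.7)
My plan is to translate all three inequalities into a single Weyl-type bound on signatures of symmetric matrices. First I would observe that, since $\mathrm{Im}\, X$ is isotropic and $X\in \mathfrak{sp}(2n)$, we have $(\mathrm{Im}\, X)^\omega=\ker X$, hence $\mathrm{Im}\, X\subset \ker X$ and $X^2=0$; in particular $(1+tX)^{-1}=1-tX$. Next, viewing $(s,t)\mapsto(1\times\Phi(t)(1+stX))\Delta$ as a null-homotopy on $[0,1]^2$ of the Lagrangian loop traced by its boundary (constant $\Delta$ on the left, the graph paths of $\Phi(t)$ and $\Phi(t)(1+tX)$ on the bottom and top, and $\gamma(s):=(1\times\Phi(1)(1+sX))\Delta$ on the right), catenation and homotopy invariance of $\mu_\Delta$ yield
\[
\CZ(\Phi(t)(1+tX))-\CZ(\Phi(t))=\mu_\Delta(\gamma).
\]

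\textbf{The quadratic form via the Cayley transform.} Next I would compute $\mu_\Delta(\gamma)$ via the localization property. Set $\Psi(s)=\Phi(1)(1+sX)$ and $M(s)=\Id+\Psi(s)$. At values of $s$ where $-1$ is not an eigenvalue of $\Psi(s)$, the graph of $\Psi(s)$ is transverse to $\bar\Delta=\{(v,-v)\}$, and can be written as $\mathrm{graph}_\Delta^{\bar\Delta} Q(s)$. Decomposing $(v,\Psi(s)v)=(a,a)+(T(s)a,-T(s)a)$ with $T(s)=2M(s)^{-1}-\Id$ the Cayley transform, one computes that the matrix of the quadratic form $Q(s)$ on $\Delta\cong\R^{2n}$ is $Q(s)=-2J_0 T(s)$. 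Since $\Psi(s)\in\Sp(2n)$, the Cayley transform lies in $\mathfrak{sp}(2n)$ (verifiable via $T^T=-J_0 T J_0^{-1}$), so $J_0 T(s)$ is symmetric and $\rank Q(s)=\rank T(s)=\rank(\Psi(s)-\Id)$. Localization then yields $\mu_\Delta(\gamma)=\tfrac{1}{2}(\sign Q(1)-\sign Q(0))$. The finitely many $s$ at which $\bar\Delta$ fails to be transverse would be handled by patching in locally chosen transversal Lagrangians via catenation, or by perturbing $\Phi$ slightly and invoking pseudo-continuity; this is the one technical complication of the argument.

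\textbf{Rank estimate and Weyl's inequality.} Since $M(1)-M(0)=\Phi(1)X$ has rank at most $\rank X$, the resolvent identity $M(1)^{-1}-M(0)^{-1}=-M(1)^{-1}\Phi(1)X M(0)^{-1}$ shows $\rank(T(1)-T(0))\leq\rank X$. Hence $R:=Q(1)-Q(0)=-4J_0(M(1)^{-1}-M(0)^{-1})$ is symmetric of rank at most $\rank X$. Invoking Weyl's inequality for signatures (the numbers $p, n$ of positive and negative eigenvalues of a symmetric matrix each change by at most the rank of a perturbation), and setting $p_i=p(Q(i))$, $n_i=n(Q(i))$, $a=p_1-p_0$, $b=n_1-n_0$, I obtain $|a|,|b|\leq\rank X$.

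\textbf{Conclusion.} With $r=\rank(\Phi(1)-\Id)$ and $r'=\rank(\Phi(1)(1+X)-\Id)$ the rank identity gives $p_0+n_0=r$ and $p_1+n_1=r'$, so
\[
\CZ(\Phi(1+tX))-\CZ(\Phi)=\tfrac{1}{2}(a-b), \qquad \max\CZ(\Phi(1+tX))-\max\CZ(\Phi)=\tfrac{1}{2}(a-b)+\tfrac{1}{2}(r-r')=-b.
\]
The three inequalities of the lemma then reduce to $|a-b|\leq 2\rank X$, $|b|\leq\rank X$, and $-b+(r'-r)=a\leq\rank X$, all of which follow from $|a|,|b|\leq\rank X$. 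The main obstacle to carrying the proof out carefully is the localization step when $-1$ is an eigenvalue of some $\Psi(s)$.
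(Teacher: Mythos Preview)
Your approach matches the paper's at the structural level: both reduce the CZ-difference to the Maslov index $\mu_\Delta$ of the one-parameter family of graphs $s\mapsto(1\times A(1+sX))\Delta$ (with $A=\Phi(1)$), express it via localization as half a signature difference, and then bound that difference by the rank of the perturbation. The substantive divergence is in the choice of Lagrangian complement to $\Delta$. You take the anti-diagonal $\bar\Delta$, which forces the Cayley transform and a rational family $Q(s)$ that may leave the chart when $-1\in\mathrm{spec}\,\Psi(s)$; you correctly flag this as the remaining obstacle. The paper instead picks $L_1\times L_0$ with $L_0\subset\R^{2n}$ a Lagrangian containing $\mathrm{Im}(AX)$ and $L_1$ a Lagrangian complement satisfying $L_0\cap AL_1=0$. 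With that choice one checks $(1\times A(1+tX))\Delta=\mathrm{graph}_\Delta^{L_1\times L_0}(q_0+tq_1)$, an \emph{affine} path of quadratic forms with $\rank q_1=\rank X$; the family never leaves the chart, localization applies with no caveats, and the bound $|\sign(q_0+q_1)-\sign q_0|\leq 2\rank q_1$ is immediate. This is exactly the clean way to dissolve the difficulty you identify, and it is the one point where your argument is genuinely incomplete as written. Your bookkeeping for the three inequalities, tracking the positive and negative eigenvalue counts $p,n$ separately and reading off $-b$ and $a$ directly, is if anything a bit more streamlined than the paper's route through $\sign$ and $\widetilde{\sign}=p_++p_0-p_-$.
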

\begin{proof}
First we show the first inequality. Put $A = \Phi(1) \in \Sp(2n)$.
Then by the catenation property and homotopy invariance of the Maslov index, the left hand side of the inequality coincides with 
\[
|\mu_{\Delta}((1\times A(1+tX))\Delta)_{t\in[0,1]}|.
\]
Take a Lagrangian subspace $L_0 \subset (\R^{2n},\omega_0)$ containing
$\mathrm{Im} AX$, and let $L_1 \subset (\R^{2n},\omega_0)$ be another Lagrangian subspace such that $L_1\oplus L_0=\R^{2n}$ and $L_0\cap AL_1=0$.
Then under the decomposition $\R^{2n}\times\R^{2n}=\Delta\oplus (L_1\times L_0)$,
\[
(1\times A(1+tX))\Delta = \mathrm{graph}_\Delta^{L_1\times L_0}(q_0+tq_1),
\]
where $q_0$ and $q_1$ are some quadratic forms on $\Delta$, and $\rank q_1=\rank X$.
Indeed,
\begin{align*}
(x,A(1 + tX)x) =& \{((P_0 + P_1 A)x, (P_0 + P_1 A)x)\} \\
&+ \{(P_1 (1-A)x, P_0(A-1)x) + t (0, AXx)\}\\
&\in \Delta \oplus (L_1\times L_0) \quad \text{ for any } x\in \R^{2n}
\end{align*}
and $P_0 + P_1A$ is invertible, where $P_0$ and $P_1$ stands for the projections to $L_0$ and $L_1$ respectively.
Hence by the localization property of the Maslov index,
\[
\mu_{\Delta}((1\times A(1+tX))\Delta)_{t\in[0,1]}=\frac{1}{2} (\sign(q_0+q_1)-\sign q_0).
\]
The claim follows by the fact that for symmetric matrices $A$ and $B$,
\[
|\sign (A+B)-\sign A|\leq 2\rank B.
\]

The second inequality is a corollary of the first one and Remark \ref{maxrem}. 

In the above setting, the left-hand side of the third inequality is equal to
\[
\frac{1}{2} ( \signn (q_0 + q_1) - \signn q_0 ),
\]
where $\signn q = p_+ (q) + p_0 (q) - p_-(q)$. ($p_+ (q)$, $p_0 (q)$, $p_- (q)$ are the number of positive, zero, minus eigenvalues respectively.)
The claim follows by the fact that if symmetric matrices $A$ and $B$ satisfy
$\rank (A+B) \geq \rank A + k$, then
\[
\signn (A+B)-\signn A\leq 2\rank B - 2k,
\]
This is because
\begin{gather*}
p_0(A + B) - p_0(A) \leq -k,\\
p_+(A+B) - p_+(A) \leq \rank B, \text{ and}\\
\signn (A+B) - \signn A = 2\bigl( (p_+ + p_0)(A+B) - (p_+ + p_0)(A) \bigr).
\end{gather*}
\end{proof}

\begin{cor}\label{parameter}
Under the condition of Remark \ref{parameterrem},
we fix a symplectic trivialization $T_xM\cong T_{\phi_t^{\chi \circ H}(x)}M$.
Then
\[
|\max\CZ ((\phi ^{\chi \circ H}_t)_{\ast x})_{t\in[0,1]} - \max \CZ ((\phi ^H_{\chi'(H(x))\cdot t})_{\ast x})_{t\in[0,1]}|\leq 1.
\]
\end{cor}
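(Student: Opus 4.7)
The plan is to combine Remark \ref{parameterrem} with the second inequality of Lemma \ref{time}. The corollary is essentially immediate once we identify the right $\Phi$ and $X$.

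First I would set $\Phi(t) = (\phi^H_{\chi'(H(x))\cdot t})_{\ast x}$, viewed as a path in $\Sp(2n)$ via the fixed symplectic trivialization $T_xM\cong T_{\phi^{\chi\circ H}_t(x)}M = T_{\phi^H_{\chi'(H(x))\cdot t}(x)}M$ (note that the two endpoints of the two flows coincide because $\phi^{\chi\circ H}_t(x) = \phi^H_{\chi'(H(x))\cdot t}(x)$, as $x$ lies on a level set of $H$). Then by Remark \ref{parameterrem},
\[
(\phi^{\chi\circ H}_t)_{\ast x} = \Phi(t)\circ (1 + tX), \qquad X\in \Xi,
\]
where $X:\xi\mapsto \chi''(H(x))\,\omega(\xi,X_H(x))\,X_H(x)$.

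Next I would observe that $\mathrm{Im}\, X$ is contained in the line $\R\cdot X_H(x)$, so $\rank X \leq 1$ (it is $0$ if $X_H(x) = 0$ or $\chi''(H(x)) = 0$, and $1$ otherwise). Applying the second inequality of Lemma \ref{time} to $\Phi$ and $X$ then yields
\[
|\max\CZ(\Phi(t)\circ (1 + tX)) - \max\CZ(\Phi(t))| \leq \rank X \leq 1,
\]
which is exactly the claim of the corollary.

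There is essentially no obstacle; the only minor point to verify is that the trivialization is used consistently so that both curves $(\phi^{\chi\circ H}_t)_{\ast x}$ and $(\phi^H_{\chi'(H(x))\cdot t})_{\ast x}$ are regarded as paths in the \emph{same} $\Sp(2n)$ starting at $\Id$, which is guaranteed by the choice made in the hypothesis. The factorization through the same terminal fiber of the trivialization works because $X_H(x)$ is tangent to the level set $\{H = H(x)\}$, so both Hamiltonian flows keep $x$ on the same orbit, and the trivialization along $(\phi^H_{\chi'(H(x))\cdot t}(x))_{t\in[0,1]}$ induced from $x$ serves for both paths.
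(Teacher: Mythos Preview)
Your proposal is correct and is exactly the argument the paper intends: the corollary is stated immediately after Lemma \ref{time} with no separate proof, and the factorization $(\phi^{\chi\circ H}_t)_{\ast x} = \Phi(t)\circ(1+tX)$ from Remark \ref{parameterrem} together with $\rank X \leq 1$ and the second inequality of Lemma \ref{time} is precisely what is meant.
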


\begin{cor}\label{moment}
Under the condition of Remark \ref{momentrem},
\[
\max \CZ ((\phi^H_t)_{\ast x})_{t\in[0,1]} \leq \max \CZ ((e^{tdf(\mu (x))})_{\ast x})_{t\in[0,1]}
\]
for every $x\in \Fix \phi_1^H$ and any symplectic trivialization of $TM$ over the closed orbit.
\end{cor}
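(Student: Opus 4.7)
The plan is to apply the third inequality of Lemma \ref{time} to the factorization from Remark \ref{momentrem}: writing $\Phi(t) = (e^{tdf(\mu(x))})_{\ast x}$ and $X : \xi \mapsto \underline{(df\circ\mu)_{\ast}\xi}_x$, we have $(\phi^H_t)_{\ast x} = \Phi(t) \circ (1 + tX)$ with $X \in \Xi$. The third inequality then reads
\[
\max \CZ ((\phi^H_t)_{\ast x}) - \max \CZ (\Phi(t)) \leq \rank X - \bigl(\rank(\Phi(1)(1 + X) - \Id) - \rank(\Phi(1) - \Id)\bigr),
\]
so it suffices to show $\rank(\Phi(1)(1 + X) - \Id) \geq \rank(\Phi(1) - \Id) + \rank X$. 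Set $A = \Phi(1)$ and $K_1 = \Ker(A - \Id)$.

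Two linear-algebraic observations drive the proof. First, since $g := e^{df(\mu(x))}$ fixes $x$ and $T^k$ is abelian, differentiating the identity $g \cdot e^{sY} \cdot x = e^{sY} \cdot x$ at $s = 0$ gives $A\underline{Y}_x = \underline{Y}_x$ for every $Y \in \mathfrak{t}$; hence $A$ is the identity on $T_x \mathcal{O} \supseteq \mathrm{Im}\, X$, which in turn gives $AX = X$ and therefore $A(1 + X) - \Id = (A - \Id) + X$. Second, $A$ is the derivative at a fixed point of a compact torus action, hence is semisimple. Combined with the standard identity $\mathrm{Im}(A - \Id) = K_1^{\omega}$ (valid for any $A \in \Sp$), semisimplicity yields the decomposition $T_xM = K_1 \oplus K_1^{\omega}$; in particular $K_1 \cap K_1^{\omega} = 0$.

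It remains to analyze $\Ker((A - \Id) + X)$. If $(A - \Id)\xi + X\xi = 0$, then $(A - \Id)\xi = -X\xi$ lies in $\mathrm{Im}(A - \Id) \cap \mathrm{Im}\, X \subseteq K_1^{\omega} \cap K_1 = 0$, so $\xi \in K_1 \cap \Ker X$; the reverse inclusion is obvious, giving $\Ker((A - \Id) + X) = K_1 \cap \Ker X$. Because $X \in \Xi$ forces $\Ker X = (\mathrm{Im}\, X)^{\omega}$, and $\mathrm{Im}\, X \subseteq K_1$ gives $\Ker X \supseteq K_1^{\omega}$, we have $K_1 + \Ker X = T_xM$ and hence $\rank X|_{K_1} = \rank X$. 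Thus
\[
\dim \Ker((A - \Id) + X) = \dim(K_1 \cap \Ker X) = \dim K_1 - \rank X,
\]
equivalently $\rank(A(1 + X) - \Id) = \rank(A - \Id) + \rank X$, which is the desired inequality. The main obstacle is recognizing how the semisimplicity of $A$ and the symplectic identity $\Ker X = (\mathrm{Im}\, X)^{\omega}$ interact through $\mathrm{Im}\, X \subseteq K_1$ to collapse $\Ker((A - \Id) + X)$ down to $K_1 \cap \Ker X$; once this is in place, a dimension count finishes the proof.
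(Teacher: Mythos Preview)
Your proof is correct and follows essentially the same route as the paper's: both apply the third inequality of Lemma \ref{time} and establish the rank identity $\rank(A(1+X)-\Id)=\rank(A-\Id)+\rank X$ from the two facts $AX=X$ and $\mathrm{Im}(A-\Id)\cap\mathrm{Im}\,X=0$. The paper simply asserts these two facts without explanation, whereas you supply the justifications (abelianness of $T^k$ for $AX=X$; semisimplicity of $A$ together with $\mathrm{Im}(A-\Id)=K_1^{\omega}$ for the trivial intersection) and spell out the kernel/dimension count that turns them into the rank identity.
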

\begin{proof}
For $x\in \Fix \phi_1^H$,
\[
\rank \bigl( (e^{df(\mu (x))})_{\ast x}(1+X) -\Id \bigr) - \rank \bigl( (e^{df(\mu (x))})_{\ast x} - \Id \bigr) = \rank X,
\]
since $\mathrm{Im}((e^{df(\mu (x))})_{\ast x}-\Id) \cap \mathrm{Im}X = 0$ and $(e^{df(\mu (x))})_{\ast x} X = X$.	
Hence Lemma \ref{time} implies the claim.
\end{proof}

\section{Proof of the main theorem and examples}\label{proof}
First we prove some well-known properties of convex open subsets of $\R^{m}$
needed for the proof of the main theorem.
Recall we say a bounded open subset $U\subset \R^{m}$ is strictly convex if there exists
a smooth function $F:\R^{m}\to \R$ such that $D^2F>0$ and $\partial U =\{F=0\}$.
\begin{lem}\label{conv}
Let $U\subset \R^m $ be a convex open subset. Then for any compact subset $K\subset U$,
there exists a strictly convex open subset $K\subset V\Subset U$.
\end{lem}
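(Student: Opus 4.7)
The plan is to produce $V$ in two stages: first find a bounded convex open $W$ with $K \subset W \Subset U$, and then inscribe inside $W$ a smooth strictly convex body still containing $K$ by smoothing the Minkowski gauge of $W$. For Stage 1 the key fact is that on a convex open $U \subsetneq \R^m$ the function $x \mapsto \dist(x, \R^m \setminus U)$ is concave: if $B(x,r_x), B(y,r_y) \subset U$, then $B(tx+(1-t)y,\, tr_x+(1-t)r_y)$ is their Minkowski combination and is again contained in $U$ by convexity. Setting $d = \dist(K, \R^m \setminus U) > 0$, the superlevel set $\{x \in U : \dist(x, \R^m \setminus U) > d/2\}$ is therefore convex, and its intersection with a large open ball containing $K$ gives the desired $W$. (If $U = \R^m$, just take any large ball.)

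For Stage 2, after translating so that $0 \in W$, I would consider the Minkowski gauge $\rho(x) = \inf\{t > 0 : x \in tW\}$, which is continuous, convex, positively $1$-homogeneous, and satisfies $W = \{\rho < 1\}$. I then set
\[
F(x) = (\rho^{2} * \phi_{\eta})(x) + \epsilon |x|^{2} - c,
\]
where $\phi_\eta$ is a symmetric smooth mollifier supported in $B(0,\eta)$, $\epsilon, \eta > 0$ are to be taken small, and $c \in (\max_K \rho^{2},\, 1)$. By construction $F$ is smooth with $D^{2} F \geq 2\epsilon I > 0$ and $F \to \infty$ at infinity, so $F$ has a unique critical point at its global minimum.

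The crucial estimate comes from Jensen's inequality: since $\rho^{2}$ is convex and $\phi_\eta$ is symmetric, $\rho^{2} * \phi_\eta \geq \rho^{2}$ pointwise, so $F \geq 1 - c > 0$ on $\R^m \setminus W$. Since mollification converges uniformly on compact sets, for $\eta,\epsilon$ small enough $F < 0$ on $K$, and in particular $\min F < 0$. Then $V := \{F < 0\}$ is bounded and open with $K \subset V$ and $\overline{V} \subset W \Subset U$; strict convexity of $F$ together with $\min F < 0$ forces $\{F = 0\}$ to be a regular level set, so $\partial V = \{F = 0\}$, as required by the definition. I expect the main obstacle to be exactly the Jensen-type inequality $\rho^{2} * \phi_\eta \geq \rho^{2}$, which is what prevents the smoothing from pushing $V$ outside $W$; Stage 1 reduces to the one-line convexity argument above, and the rest is a routine mollification calculation.
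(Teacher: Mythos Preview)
Your proof is correct and follows essentially the same approach as the paper: mollify the squared Minkowski gauge and add $\epsilon|x|^2$ to obtain a smooth function with strictly positive Hessian whose sublevel set gives $V$. Your Stage~1 reduction to a bounded $W$ and your explicit use of Jensen's inequality ($\rho^2 * \phi_\eta \geq \rho^2$) make precise steps that the paper leaves implicit in the phrase ``for appropriate $\delta>0$ and $\epsilon>0$,'' but the core construction is the same.
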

\begin{proof}
We may assume $0\in U$.
Define
\[
p(x)=\inf \{t>0; t^{-1}x\in U\} :\R^m \to \R_{\geq 0}, \ f(x)=p(x)^2 :\R^m \to \R_{\geq 0}.
\]
Then
\[
f((1-t)x + ty)\leq (1-t)f(x) + tf(y) \ 0\leq t \leq1.
\]
Let $f_\delta = \phi_\delta \ast f$ be the convolution with a mollifier $\phi_\delta(x)=\delta^{-m}\phi (\frac{x}{\delta})$, where $\phi$ is a nonnegative smooth function with compact support whose integral is one.
Then by the above inequality and the definition of convolution, $f_\delta$ also satisfies
\[
f_\delta ((1-t)x + ty)\leq (1-t)f_\delta (x) + tf_\delta (y) \ 0\leq t \leq1,
\]
which implies $D^2f_\delta \geq 0$.
It follows for appropriate $\delta>0$ and $\epsilon>0$,
$V_{\delta, \epsilon}=\{ f_\delta + \epsilon |x|^2 < 1 \} \subset U$ is the required subset since $D^2(f_\delta + \epsilon |x|^2-1)>0$.
\end{proof}

\begin{lem}\label{strictconv}
Let $U\subset \R^m$ be a strictly convex open subset such that $0\in U$.
Define $f: \R^m \setminus 0 \to \R$ by
$f(t\cdot y)=t^2$ for $t>0$ and $y\in \partial U$.
Then its Hessian $D^2f(x)$ is positive definite on $\R^m\setminus 0$.
\end{lem}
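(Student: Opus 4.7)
The plan is to reduce everything to points on the boundary $\partial U$ by using the homogeneity of $f$, and then at a boundary point split $\R^m$ into the radial line and the tangent hyperplane, proving positivity on each piece separately.

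First I would record that $f$ is homogeneous of degree $2$: $f(tx)=t^2 f(x)$ for $t>0$. Differentiating twice gives that $Df$ is homogeneous of degree $1$ and $D^2 f$ is homogeneous of degree $0$, so $D^2 f(x)=D^2 f(x/p(x))$ where $p=\sqrt{f}$ is the Minkowski gauge of $U$. Hence it suffices to prove that $D^2 f(y)$ is positive definite for every $y\in\partial U=\{f=1\}$. (Smoothness of $f$ away from $0$ is automatic from the implicit function theorem applied to $F$, since $DF(y)\cdot y>0$ at each $y\in\partial U$.)

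Fix $y\in\partial U$. Euler's identity yields $Df(y)\cdot y = 2f(y)=2$ and $D^2 f(y)\cdot y = Df(y)$, so
\[
\langle D^2f(y)y,\,y\rangle = 2,\qquad \langle D^2f(y)y,\,v\rangle = Df(y)\cdot v = 0 \text{ for } v\in T_y\partial U,
\]
since $T_y\partial U=\Ker Df(y)$. In particular the direct sum decomposition $\R^m=\R y\oplus T_y\partial U$ is $D^2 f(y)$-orthogonal, and $D^2 f(y)$ is already positive on the radial line. So the only remaining task is to show positivity on the tangent hyperplane.

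For $v\in T_y\partial U\setminus 0$, consider the curve $\beta(s)=(y+sv)/p(y+sv)\in\partial U$. One computes $\beta(0)=y$, $\beta'(0)=v$ (using $p'(y)v=\tfrac12 Df(y)v=0$) and $\beta''(0)=-y\,h''(0)$ where $h(s)=p(y+sv)$. Since $F\circ\beta\equiv 0$, differentiating twice at $s=0$ gives
\[
D^2F(y)(v,v) = h''(0)\,DF(y)\cdot y.
\]
Strict convexity of $F$ makes the left-hand side strictly positive, and $DF(y)\cdot y>0$ because $0\in U=\{F<0\}$, so $h''(0)>0$. Then $D^2f(y)(v,v)=\tfrac{d^2}{ds^2}\big|_0 h^2 = 2h''(0)>0$, completing the proof. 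The main subtle point is relating $f$ (which is intrinsically defined via rays) to the arbitrary defining function $F$; the trick is to evaluate $F$ along the radial projection $\beta(s)$ rather than along the straight line $y+sv$.
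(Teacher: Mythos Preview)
Your proof is correct and follows essentially the same approach as the paper: reduce to $y\in\partial U$ by homogeneity, decompose $\R^m=\R y\oplus T_y\partial U$, handle the radial part via Euler's identity (the paper does the equivalent direct computation $D^2f(x)(x,x)=2$, $D^2f(x)(x,v)=0$), and for the tangential part differentiate the identity $F\bigl(f(y+sv)^{-1/2}(y+sv)\bigr)=0$ twice---your curve $\beta(s)$ is exactly this radial projection, and your relation $D^2F(y)(v,v)=h''(0)\,DF(y)\cdot y$ together with $D^2f(y)(v,v)=2h''(0)$ recovers the paper's formula $D^2F(v,v)=\tfrac12(DF\cdot y)\,D^2f(v,v)$. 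The only cosmetic difference is that you invoke Euler's identity and name $\beta$ explicitly, and you spell out why $DF(y)\cdot y>0$, which the paper uses tacitly.
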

\begin{proof}
Since $D^2f(a\cdot x) = D^2f(x)$ for $a>0$, it is enough to show $D^2f(x)>0$ for $x\in \partial U$.
Take any $v\in T_x\partial U$. Since $f$ is defined by the equation $F(f(y)^{-\frac{1}{2}}y)=0$,
$F( f(x + tv)^{-\frac{1}{2}} (x+tv))=0$ for all $t\in\R$.
Differentiating this equation at $t=0$, we get
\[
DF\cdot v =\frac{1}{2}(DF\cdot x)(Df\cdot v).
\]
Hence $Df\cdot v=0$ because $DF\cdot v =0$.
Differentiating the same equation twice at $t=0$ and using $Df\cdot v=0$, we get
\[
D^2F(v,v)=\frac{1}{2}(DF\cdot x)D^2f(v,v),
\]
which implies $D^2f(x)(v,v)>0$.
It is also easily seen that $D^2f(x)(x,x)=2>0$ and $D^2f(x)(x,v)=\frac{\partial ^2}{\partial s \partial t}f(s(x+tv))\bigr| _{s=1,t=0}=0$.
\end{proof}

For a function $H : \R^{2n} \to \R$, define $A_H : \R^{2n} \to \R$ by
\[
A_H (x)=-\int_{\phi^H_{[0,1]}(x)} \lambda + H(x),
\]
where $\lambda = \frac{1}{2} \sum_{j=1}^n (x_j dy_j - y_j dx_j)$.

\begin{lem}\label{delta}
Let $U\subset R^{2n}$ be a strictly convex open subset such that $0\in U$ and define $f: \R^{2n} \setminus 0 \to \R$ as above.
Then for its convolution $f_{\delta}=f\ast \phi_{\delta}$ with a mollifier $\phi_{\delta}$,
\begin{enumerate}[\normalfont(i)]
\item
if $D^2f \geq a $ on $\R^{2n}\setminus 0$, then $D^2f_{\delta} \geq a$ on $\R^{2n}$.
\item
there exist some constant $C>0$ such that
\[
\bigl| A_{\chi\circ f_{\delta}} (x) - \bigl( -\chi' (f_{\delta}(x))f_{\delta}(x) + \chi (f_{\delta}(x)) \bigr) \bigr| \leq C \delta |\chi'|_{L^{\infty}}
\]
for any $\chi :\R \to \R$ and $x \in U$.
\end{enumerate}
\end{lem}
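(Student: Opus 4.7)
For part (i), I would use a standard convolution argument. Because $f$ is $C^2$ on $\R^{2n}\setminus 0$ and $2$-homogeneous, $\nabla f$ is $1$-homogeneous and $D^2 f$ is $0$-homogeneous, hence globally bounded; thus $f$ is $C^{1,1}$ with $D^2 f \in L^\infty$ satisfying $D^2 f \geq a$ almost everywhere. Since $D^2 f_\delta = (D^2 f) \ast \phi_\delta$ and $\phi_\delta \geq 0$, $\int \phi_\delta = 1$, the pointwise inequality $D^2 f_\delta \geq a$ on all of $\R^{2n}$ follows.

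For part (ii), the plan hinges on the identity
\[
\lambda_x(X_g(x)) = \tfrac{1}{2}\, x \cdot \nabla g(x)
\]
for any smooth $g$ on $\R^{2n}$, where $x$ denotes the position vector. This follows from $i_E \omega_0 = 2\lambda$, with $E = \sum_j(x_j \partial_{x_j} + y_j \partial_{y_j})$ the Euler vector field, combined with $i_{X_g}\omega_0 = -dg$. Applied to the $2$-homogeneous $f$, Euler's identity $x \cdot \nabla f(x) = 2 f(x)$ gives $\lambda(X_f) = f$, so the formula claimed in (ii) would be \emph{exact} for $f$ in place of $f_\delta$. For the mollification, writing $x \cdot \nabla f_\delta(x) = \int x \cdot \nabla f(x-z)\,\phi_\delta(z)\,dz$, splitting $x = (x-z) + z$ under the integral, and applying Euler's identity to $(x-z)\cdot \nabla f(x-z)$ yields
\[
\lambda(X_{f_\delta})(x) = f_\delta(x) + \tfrac{1}{2}\int z \cdot \nabla f(x-z)\,\phi_\delta(z)\,dz,
\]
and since $\nabla f$ is $1$-homogeneous we have $|\nabla f(w)| \leq C|w|$, so the error term is bounded by $C\delta(|x|+\delta)$.

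The last step is to observe that $H = \chi\circ f_\delta$ preserves $f_\delta$ (hence also $\chi'(f_\delta)$), and $X_H = \chi'(f_\delta)\,X_{f_\delta}$. Substituting into the definition of $A_H$,
\[
A_H(x) = -\chi'(f_\delta(x))\int_0^1 \lambda(X_{f_\delta})(\phi^H_t x)\,dt + \chi(f_\delta(x)),
\]
and the preceding estimate, integrated along the orbit and using that $f_\delta$ itself is constant along the orbit, gives $\int_0^1 \lambda(X_{f_\delta})(\phi^H_t x)\,dt = f_\delta(x) + O(\delta)$, which yields the claim. The main point I would verify carefully is that orbits starting in $U$ stay in a bounded set whose bound is independent of $\delta$ (for $\delta$ small): this holds because $f_\delta$ grows quadratically at infinity like $f$, is constant along each orbit, and takes values bounded on $U$ by a constant depending only on $U$. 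Once this boundedness is in hand, the constant $C$ in the estimate $|A_{\chi\circ f_\delta}(x) - (-\chi'(f_\delta)f_\delta + \chi(f_\delta))| \leq C\delta |\chi'|_{L^\infty}$ depends only on $U$.
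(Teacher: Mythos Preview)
Your proposal is correct and follows essentially the same route as the paper. For (i) the paper phrases the argument as ``$f-\tfrac{a}{2}|x|^2$ is convex, convexity is preserved under mollification,'' which is exactly your Hessian-convolution argument in integrated form; for (ii) both proofs start from $i_{X_H}\lambda=\tfrac{1}{2}DH\cdot x$, pull out the factor $\chi'(f_\delta(x))$, expand $Df_\delta\cdot x$ via the convolution, split $x=(x-y)+y$, and invoke Euler's identity $Df\cdot x=2f$ to isolate an error term $\int \phi_\delta(y)\,Df(\cdot-y)\cdot y\,dy=O(\delta)$. Your explicit discussion of why orbits through $U$ stay in a fixed bounded set (via conservation of $f_\delta$ and its quadratic growth) is a point the paper leaves implicit in its bound $C|Df|_{L^\infty(2U)}\delta$.
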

\begin{proof}
(i)
The assumption implies $f - \frac{a}{2}|x|^2$ is convex on $\R^{2n}$.
Therefore $(f - \frac{a}{2}|x|^2)\ast \phi_{\delta}$ is also convex, which implies the claim.

(ii)
Notice for any function $H$, $i_{X_H} \lambda (x) = \frac{1}{2} DH(x)\cdot x$. This implies
\begin{align*}
\int_{\phi^{\chi\circ  f_{\delta}} _{[0,1]} (x) } \lambda &= \int^1_0 i_{X_{\chi \circ f_{\delta}}} \lambda (\phi^{\chi \circ f_{\delta}}_t (x)) dt\\
&=\frac{1}{2} \chi' (f_{\delta}(x)) \int ^1_0 Df_{\delta} (\phi^{\chi\circ f_{\delta}}_t (x)) \cdot \phi^{\chi \circ f_{\delta}}_t (x)  dt  \\
\end{align*}
Hence
\begin{align*}
&\int_{\phi^{\chi\circ  f_{\delta}} _{[0,1]} (x) } \lambda - \chi' (f_{\delta}(x))f_{\delta}(x) \\
&= \frac{1}{2} \chi' (f_{\delta}(x)) \int ^1_0 \Bigl( Df_{\delta} (\phi^{\chi\circ f_{\delta}}_t (x)) \cdot \phi^{\chi \circ f_{\delta}}_t (x) - 2f_{\delta}(\phi^{\chi\circ f_{\delta}}_t (x)) \Bigr)dt.
\end{align*}
By the definition of the convolution,
\begin{gather*}
Df_{\delta} (\phi^{\chi\circ f_{\delta}}_t (x)) \cdot \phi^{\chi \circ f_{\delta}}_t (x)
= \int _{\R^{2n}} \phi_{\delta} (y) Df( \phi^{\chi \circ f_{\delta}}_t (x) - y ) \cdot \phi^{\chi \circ f_{\delta}}_t (x) dy, \\
2f_{\delta}(\phi^{\chi\circ f_{\delta}}_t (x)) = \int _{\R^{2n}} \phi_{\delta} (y) Df( \phi^{\chi \circ f_{\delta}}_t (x) - y ) \cdot (\phi^{\chi \circ f_{\delta}}_t (x) - y )dy,
\end{gather*}
where we have used that $2f(x) = Df(x) \cdot x$.
These two equations imply
\begin{align*}
\bigl|Df_{\delta} (\phi^{\chi\circ f_{\delta}}_t (x)) \cdot \phi^{\chi \circ f_{\delta}}_t (x)
-
2f_{\delta}(\phi^{\chi\circ f_{\delta}}_t (x))\bigr|
&= \biggl|\int _{\R^{2n}} \phi_{\delta} (y) Df( \phi^{\chi \circ f_{\delta}}_t (x) - y ) \cdot y dy\biggr| \\
& \leq C|Df|_{L^{\infty}(2U)} \delta,
\end{align*}
where we assume the support of $\phi$ is contained in the ball of radius $C$. This proves the assertion.

\end{proof}

\begin{defi}
For a strictly convex open subset $U\subset \R^{2n}$ such that $0\in U$,
\begin{itemize}
\item
$\widehat{C}(U)=\frac{2\pi}{a}$, where $a= \min_{x\in \R^{2n}\setminus 0}$ \{the minimal eigenvalue of 
$D^2f(x)$\}
\item
$C(U)=\inf$\{$\widehat{C}(V) ; V \subset \R^{2n}$ strictly convex and $(V,\omega_0)\cong (U,\omega_0) $\}
\item
$\widehat{C}_0(U) = \inf_{V \subset \R^{2n}\setminus 0, a>0}
\frac{2\pi}{a}$,
where infimum is taken over all one-dimensional complex subspace $V\subset \R^{2n}$ and $a>0$
such that $D^2f(x) > a|_{V\times V} \oplus 0|_{ V^{\bot} \times V^{\bot} }$
for every $x\in \R^{2n}\setminus 0$.
\item
$C_0(U)=\inf$\{$\widehat{C}_0(V) ; V \subset \R^{2n}$ strictly convex and $ (V,\omega_0)\cong (U,\omega_0)$\}
\end{itemize}
\end{defi}
We note that in the definition of $\widehat{C}(U)$,
minimum over $R^{2n}\setminus 0$ is obtained since $D^2f(ax)= D^2f(x)$ for all $a>0$.

For example,
if $U=E(r_1,r_2,\dots,r_n)=\{ z=(z_1,z_2,\dots,z_n)\in \R^{2n}; \sum \frac{|z_i|^2}{r^2_i} < 1\}$ $(r_1\leq r_2\leq \dots \leq r_n)$ is an ellipsoid,
then $f = \sum \frac{|z_i|^2}{r^2_i}$, therefore
$\widehat{C}(U)=\pi r_n^2$ and $\widehat{C}_0(U)=\pi r_1^2$.

Theorem \ref{main} is a corollary of Lemma \ref{conv} and the following Proposition.
\begin{prop}\label{bound}
Let $(M,\omega)$ be a closed symplectic manifold satisfying $c_1 = \kappa \omega$ on
$\pi_2(M)$.
Assume strictly convex open subsets $U_j \subset (\R^{2n},\omega_0)$ are disjointly symplectically embedded in $(M,\omega)$.
Then for any function $F\in C(M\times S^1)$ such that $F|_{(M\setminus \coprod U_j)\times S^1}=0$,
\begin{itemize}
\item
if $\kappa \leq 0$ then
\[
0\leq c([M];F) \leq \max_j C_0(U_j)
\]
\item
if $\kappa > 0$ and $\max C(U_j) \leq \frac{n}{\kappa}$, then
\[
0\leq c([M]; F) \leq \max_j C(U_j)
\]

\end{itemize}

\end{prop}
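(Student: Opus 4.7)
The plan is to bound $c([M];F)$ via spectrality, realizing it as the action of a specific $\mu=n$ critical point of a dominating Hamiltonian built from the convex Minkowski gauge of each $U_j$, and then controlling that action through the Conley--Zehnder estimates of Section \ref{CZ}.

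\textbf{Reduction.} By Lemma \ref{conv} and Lipschitz continuity of $c([M];\cdot)$, I may assume each $U_j$ is strictly convex with $0\in U_j$ and $F$ is smooth. Let $f_j$ be the squared Minkowski gauge from Lemma \ref{strictconv} and $f_{j,\delta}$ its $\delta$-mollification from Lemma \ref{delta}. Choose a monotone smooth cutoff $\chi_j:\R\to\R_{\geq 0}$ decreasing from $\chi_j(0)\geq \max_{U_j} F$ to $\chi_j(s)=0$ for $s\geq 1$, and set $G=\sum_j\chi_j\circ f_{j,\delta}\geq F$. Monotonicity gives $c([M];F)\leq c([M];G)$, so it suffices to estimate the latter.

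\textbf{Orbit inventory and spectrality.} After adding a $C^2$-small Morse function $\epsilon h$ on $M\setminus\coprod U_j$, the Hamiltonian $H=G+\epsilon h$ is nondegenerate. Its $1$-periodic orbits split into (a) constants at critical points of $h$ outside $\coprod U_j$ with action $\epsilon h(p)$ and $\CZ=\mathrm{Morse}(p)-n$ in the trivial capping; (b) the constant orbit at the origin of each $U_j$; (c) non-constant orbits confined to a single $U_j$, lying on a level set $\{f_{j,\delta}=c\}$ with winding number $k\in\Z\setminus\{0\}$ determined by $\chi_j'(c)\in \pi\Z$. By spectrality, $c([M];H)=\mathcal{A}_H([x,u])$ for some $[x,u]$ with $\mu=n$, so the task is to bound the action of every such $[x,u]$.

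\textbf{Conley--Zehnder estimate.} For a non-constant orbit of $\chi_j\circ f_{j,\delta}$ at level $c$, Remark \ref{parameterrem} and Corollary \ref{parameter} identify (up to an error of $1$ in $\CZ$) its linearized flow with the linearized flow of $\chi_j'(c)\cdot f_{j,\delta}$ along the corresponding trajectory, namely $e^{tJ_0 S}$ with $S=\chi_j'(c)D^2 f_{j,\delta}$. By Lemma \ref{delta}(i) there is a complex $1$-dimensional subspace $V$ with $D^2f_{j,\delta}\geq (2\pi/\widehat{C}_0(U_j))|_V\oplus 0|_{V^\perp}$, so the Comparison Lemma \ref{comparison}(ii) (applied to $-S$ when $\chi_j'(c)<0$) forces the natural $\CZ$-index to have absolute value $\geq 2|k|-O(1)$, hence the natural $\mu\approx -2nk$ is far from $n$ when $|k|$ is large. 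To obtain $\mu=n$ the capping must be shifted by $A\in\Gamma$ with $-2\kappa\omega(A)=n-\mu_{\mathrm{nat}}$, shifting the action by $\omega(A)$. When $\kappa\leq 0$ a careful solution of this constraint yields
\[
\mathcal{A}_H([x,u])\leq \chi_j(c)-c\chi_j'(c)+\omega(A)+O(\delta)\leq \max_j\widehat{C}_0(U_j)+o(1),
\]
where the first inequality uses Lemma \ref{delta}(ii) and the second reflects the fact that for the admissible capping the scaling of $\chi_j$ with $\max F$ cancels against $\omega(A)$, leaving only the curvature contribution controlled by $\widehat{C}_0(U_j)$. Letting $\delta\to 0$ and taking the infimum over strictly convex $V\cong U_j$ converts $\widehat{C}_0$ to $C_0$. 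When $\kappa>0$, the hypothesis $\max_j C(U_j)\leq n/\kappa$ allows one to invoke the stronger full-rank Lemma \ref{comparison}(i), giving $\widehat{C}$ and hence $C$ in place of $\widehat{C}_0$ and $C_0$.

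\textbf{Lower bound and main obstacle.} The lower bound $c([M];F)\geq 0$ is obtained by the parallel argument with a dominating-from-below Hamiltonian $G^-=-\sum_j\chi_j\circ f_{j,\delta}\leq F$ in place of $G$: the non-constant orbits of $G^-$ wind in the opposite direction, and for $\kappa\leq 0$ no admissible recapping can produce a $\mu=n$ orbit with action strictly below the Morse-minimum contribution $\epsilon h(p)\to 0$ at points of $M\setminus\coprod U_j$. The main technical obstacle is the simultaneous bookkeeping of four sources of error: the mollification discrepancy (Lemma \ref{delta}(ii)), the rank-$1$ correction from the $\chi_j''(c)(Df_{j,\delta})^{\otimes 2}$ term in $D^2(\chi_j\circ f_{j,\delta})$ (absorbed by Lemma \ref{time}, in particular its third inequality), the Morse perturbation $\epsilon h$, and the recapping shift $\omega(A)$, all of which must cancel appropriately so that the remaining bound matches $\widehat{C}_0$ (or $\widehat{C}$) exactly rather than a multiple thereof.
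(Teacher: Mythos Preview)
There is a genuine gap: your strategy of bounding the action of \emph{every} $\mu=n$ capped orbit of the dominating Hamiltonian $G$ cannot succeed, because the constant orbit at the origin of each $U_j$ (your case (b)) is a local maximum of $G$, hence already has $\mu_{\mathrm{nat}}=n$ with the trivial capping and action $\chi_j(0)\geq\max_{U_j}F$. No recapping is needed for this orbit, so your cancellation mechanism ``the scaling of $\chi_j$ with $\max F$ cancels against $\omega(A)$'' simply does not apply to it, and its action can be arbitrarily large. Spectrality only tells you $c([M];G)\in\Spec_n G$; it does not single out which element, and $\Spec_n G$ genuinely contains this large value. Your Conley--Zehnder section treats only the non-constant orbits (c) and never returns to (b).

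The paper resolves this with a continuation argument that your outline is missing. One builds a one-parameter family $H^s=(\chi_0+s\chi_1)\circ(f_j)_\delta$, where the base $H^0$ is already small with $\max H^0<2\pi/a$, and $\chi_1$ is supported where $\chi_0$ has steep slope. The Conley--Zehnder estimates are then used not to bound actions directly, but to show that every orbit lying in the region where $H^s$ actually varies with $s$ has $\max\CZ<-n$. For $\kappa=0$ this means such orbits and all their recappings never enter $\Spec_n$, so $\Spec_n G^s=\Spec_n G^0$ after perturbation; for $\kappa<0$ one shows instead that the recapped actions of these $s$-dependent orbits all lie strictly above $\max H^0$, so $\Spec_n G^s\cap(-\infty,\max H^0+\epsilon'']$ is independent of $s$; for $\kappa>0$ one uses a scaling family $s\chi$ and the discreteness of $\frac{1}{\kappa}\Z$. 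In every case Lipschitz continuity of $c([M];\cdot)$ combined with this local constancy of the relevant spectrum forces $c([M];H^s)=c([M];H^0)\leq\max H^0\leq 2\pi/a$, and then $F\leq\epsilon''+H^s$ for suitable $s$ finishes. The dangerous maximum at the origin is present already in $H^0$, where its action is harmless because $H^0$ itself is small; the family argument is precisely what lets you ignore it thereafter. Your lower-bound argument is also harder than necessary: once the upper bound gives $\zeta([M];H_0)=0$ for $H_0=\dist(\cdot,M\setminus\coprod U_j)$, the complement is $[M]$-superheavy and $c([M];F)\geq 0$ follows from Lemma~\ref{geq0}.
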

\begin{proof}
The proof is based on some calculations on $\R^{2n}$.
Note for a Hamiltonian $H:\R^{2n} \to \R$,
the differential $(\phi^H_t)_\ast$ of the Hamiltonian flow is determined by the differential equations
\[
\frac{d}{dt}(\phi^H_t)_{\ast x}\xi =J_0D^2H(\phi^H_t(x))\cdot (\phi^H_t)_{\ast x}\xi\quad \forall\xi\in\R^{2n},\quad (\phi^H_0)_{\ast x}=\Id,
\]
where we regard $(\phi^H_t)_\ast$ as matrices under the natural trivialization of the tangent bundle of $\R^{2n}$.
This implies that along the orbit $\phi^H_t(x)$, $(\phi^H_t)_{\ast x}$ is the fundamental solution of the corresponding differential equation, hence we can apply the consequence of Section \ref{CZ}.

First we show the case $\kappa =0$.
For each $U_j \subset \R^{2n}$, define $f_j : \R^{2n}\setminus 0 \to \R$ as in the  Lemma \ref{strictconv}
and assume $D^2f_j >a|_{V_j\times V_j} \oplus 0|_{V_j^{\bot} \times V_j^{\bot}}$.
Fix arbitrary small constant $\epsilon>0$. Take $\epsilon' > 0$ sufficiently small.
Let $\chi_0, \chi_1 : [0,1] \to \R_{\geq 0}$ be monotone decreasing functions
such that
\begin{itemize}
\item
$\max \chi_0 <\frac{2\pi}{a}$, $\chi'_0 \equiv -(\frac{2\pi}{a} + \epsilon')$ on $[0,1-\epsilon)$, $\supp \chi_0 \subset [0,1)$
\item
$\chi_1$ is linear on $[0,1-2\epsilon]$, $\supp \chi_1 \subset [0,1-\epsilon)$.
\end{itemize}
Define $\chi^s = \chi_0 + s\chi_1 : [0,1] \to \R_{\geq 0}$ $(s\in [0,\infty))$ and consider a family of Hamiltonians
$H^s = \sum \chi^s \circ (f_j)_\delta : \coprod U_j \to \R$, which we can extend on $M$ by zero if $\delta > 0$ is sufficiently small.

For $x\in \{ (f_j)_{\delta} \leq 1-\epsilon \}$,
Corollary \ref{parameter} implies
\[
\max \CZ ((\phi^{H^s}_t)_{\ast x})_{t\in [0,1]} \leq \max \CZ ((\phi^{(f_j)_\delta}_{{\chi^s}'((f_j)_{\delta}(x)) t})_{\ast x})_{t\in [0,1]} + 1.
\]
By Lemma \ref{comparison} (ii),
\[
\max \CZ ((\phi^{(f_j)_\delta}_{{\chi^s}'((f_j)_{\delta}(x)) t})_{\ast x})_{t\in [0,1]} + 1
\leq-n-2+1 < -n
\]
since $a{\chi^s}'((f_j)_{\delta}(x)) < -2\pi$.
Hence
\[
\max \CZ ((\phi^{(f_j)_\delta}_{{\chi^s}'((f_j)_{\delta}(x)) t})_{\ast x})_{t\in [0,1]} + 1 < -n.
\]
On the other hand, $H^s=H^0$ on $M\setminus \coprod \{ (f_j)_{\delta} \leq 1-\epsilon \}$.

Let $G^s : M\times S^1 \to \R$ be small perturbations of $H^s$ such that $G^s\equiv G^0$ on
$M\setminus \coprod \{ (f_j)_{\delta} \leq 1-\epsilon \}$, and
$G^s$ is non-degenerate for any $s\in [0,\infty)\setminus A$, where $A\subset [0,\infty)$ is some countable subset.

Let $x : S^1 \to M$ be a periodic orbit of $G^s$.
If $x(t_0) \in \coprod \{ (f_j)_{\delta} \leq 1-\epsilon \}$ for some $t_0\in S^1$, then
$\CZ _{G^s}x < -n$ provided that $G^s$ is sufficiently close to $H^s$.
Hence periodic orbits which have the Conley-Zehnder index $-n$ are all contained in
$M\setminus \coprod \{ (f_j)_{\delta} \leq 1-\epsilon\}$ and are independent of $s$, which implies
$\Spec _nG^s = \Spec _nG^0$ for every $s\in [0,\infty)\setminus A$.
(See (\ref{spec_k}) in Section \ref{review} for the definition of $\Spec _k$.)
By the spectrality and Lipschitz continuity of the spectral invariant,
we have $c([M] ; G^s) = c([M] ; G^0)$.
We conclude that
\[
c([M] ; H^s) = c([M] ; H^0) \leq \max_M H^0 \leq \frac{2\pi}{a}.
\]

For any function $F\in C(M\times S^1)$ such that $F|_{(M\setminus \coprod U_j)\times S^1}=0$ and any $\epsilon'' > 0$,
there exist a small $\epsilon > 0$ such that
$\{ F\geq \epsilon''\} \subset \coprod \{f_j \leq 1-4\epsilon \}$.
We take $\delta>0$ sufficiently small so that $\{ F\geq \epsilon'' \} \subset \coprod \{(f_j)_\delta \leq 1-3\epsilon \}$.
Take $H^s$ as above for these $\epsilon>0$ and $\delta>0$, then
there exists $s>0$ such that $F\leq \epsilon'' + H^s$.
Hence by the monotonicity,
\[
c([M];F) \leq \epsilon'' + c([M];H^s) \leq \epsilon'' + \frac{2\pi}{a}.
\]
We conclude $c([M];F)\leq \max_j C_0(U_j)$.
In particular, $M\setminus \coprod U_j$ is $[M]$-superheavy, which implies $c([M];F)\geq 0$ by Lemma \ref{geq0}.

Next we consider the case $\kappa <0$.
For each $U_j \subset \R^{2n}$, define $f_j : \R^{2n} \setminus 0\to \R$ as in Lemma \ref{strictconv}
and assume $D^2f_j >a|_{V_j\times V_j} \oplus 0|_{V_j^{\bot} \times V_j^{\bot}}$.
Fix arbitrary small constant $\epsilon>0$. Take $\epsilon' > 0$ sufficiently small.
Define $\chi^s : [0,1] \to \R_{\geq 0}$ $(s\in [0,\infty))$ as in the case $\kappa = 0$
and consider a family of Hamiltonians
$H^s = \sum \chi^s \circ (f_j)_{\delta} : \coprod U_j \to \R$, which we can extend on $M$ by zero if $\delta > 0$ is sufficiently small.

Define $\widehat{A}_{H^s} : \coprod U_j \to \R$ by
\[
\widehat{A}_{H^s} (x) = -\int _{\phi ^{H^s}_{[0,1]}(x)} \lambda  + H^s(x)
+ \frac{1}{\kappa}\Bigl[ \frac{n + \max \CZ \phi^{H^s}_{t\ast x}}{2} \Bigr],
\] 
where $[x]$ stands for the largest integer less than or equal to $x$.
By Corollary \ref{parameter} and Lemma \ref{comparison} (ii),
\begin{align*}
n + \max \CZ (\phi^{H^s}_t)_{\ast x} &\leq n + \max \CZ (\phi ^{(f_j)_{\delta}} _{{\chi^s}'((f_j)_{\delta}(x)) t})_{\ast x}  + 1\\
&\leq -2\biggl[\frac{-a{\chi^s}'((f_j)_{\delta}(x))}{2\pi}\biggr]^< + 1,
\end{align*}
which implies
\[
\widehat{A}_{H^s} (x) \geq -{\chi^s}'((f_j)_{\delta}(x)) (f_j)_{\delta}(x) + \chi^s ((f_j)_{\delta}(x)) - C\delta |{\chi^s}'|_{\infty} -\frac{1}{\kappa}\biggl[\frac{-a{\chi^s}'((f_j)_{\delta}(x))}{2\pi}\biggr]^<
\]
on $U_j$, where we have used Lemma \ref{delta}.

For $s\geq 0$ and $x\in \{ (f_j)_{\delta} \leq 1-\epsilon \}$,
\begin{align*}
\widehat{A}_{H^s}(x) &\geq \max H^0 - \frac{1}{\kappa} - C\delta |{\chi^s}'|_{\infty}\\
&> \max H^0
\end{align*}
if $\delta >0$ is sufficiently small for given $s\geq 0$.
(More precisely, first we fix some large $T>0$, then there exists some $\delta>0$ such that
the above inequality holds for every $s\in [0,T]$.) Take $\epsilon'' >0$ such that
$\widehat{A}_{H^s}(x) > \max H^0 + 2\epsilon''$.

Let $G^s_t : M\times S^1 \to \R$ be small perturbations of $H^s$
such that $G^s=G^0$ on $M \setminus \coprod \{ (f_j)_{\delta} \leq 1-\epsilon \}$
and every one-periodic orbit is nondegenerate for any $s\in [0,\infty) \setminus A$, where $A\subset [0,\infty)$ is some countable subset.
If $[x,u]\in \Crit \mathcal{A}_{G^s}$ has the Conley-Zehnder index $-n$
and $x(t_0) \in \{ (f_j)_{\delta} \leq 1-\epsilon \}$ for some $t_0\in S^1$,
then $[x,u] = x\cdot A$ with 
\begin{align*}
2c_1(A) &= n + \CZ (\phi^{G^s}_t)_{\ast x(0)}\\
&= n + \CZ (\phi^{G^s}_{t_0})_{\ast x(0)}(\phi^{G^s}_t)_{\ast x(0)} ((\phi^{G^s}_{t_0})_{\ast x(0)})^{-1}\\
&\leq n + \max \CZ (\phi^{H^s}_t)_{\ast x(t_0)}.
\end{align*}
Hence
\[
\mathcal{A}_{G^s} ([x,u]) = A_{G^s} (x) + \frac{1}{\kappa} c_1(A) \geq \widehat{A}_{H^s}(x(t_0)) -\epsilon'' > \max H^0 + \epsilon'',
\]
provided that $G^s$ is sufficiently close to $H^s$.
On the other hand, if $x \subset M \setminus \coprod \{ (f_j)_{\delta} \leq 1-\epsilon \}$,
then $\mathcal{A}_{G^s} ([x,u]) = \mathcal{A}_{G^0} ([x,u])$ for every $s \geq 0$.
We conclude that
\[
\Spec _n G^s \cap (-\infty ,\max H^0 + \epsilon'' ] = \Spec _n G^0 \cap (-\infty ,\max H^0 + \epsilon'' ] 
\]
for $s \in [0,\infty)\setminus A$,
which implies $c([M];G^s) = c([M];G^0)$.
Hence $c([M];H^s) = c([M];H^0)$.
The rest of the proof continues in the same way as in the case
$\kappa = 0$.

Finally we prove the case $\kappa >0$.
For each $U_j \subset \R^{2n}$, define $f_j : \R^{2n} \to \R$ as in the  Lemma \ref{strictconv},
and assume $D^2f_j \geq a >0$ and $\frac{2\pi}{a} \leq \frac{n}{\kappa}$.
Let $\chi :[0,1] \to \R_{\geq 0}$ be an arbitrary monotone decreasing function such that
$\chi'' \geq 0$ on $[0,1]$ and $\mathrm{supp} \chi \subset [0,1)$.
Consider a Hamiltonian $H= \chi \circ (f_j)_{\delta}$.
By Corollary \ref{parameter} and Lemma \ref{comparison} (i),
\begin{align*}
n + \max \CZ (\phi^H_t)_{\ast x} &\leq n + \max \CZ (\phi ^{(f_j)_{\delta}} _{\chi'((f_j)_{\delta}(x)) t})_{\ast x}  + 1\\
&\leq -2n\biggl[\frac{-a\chi'((f_j)_{\delta}(x))}{2\pi}\biggr]^< + 1.
\end{align*}
Hence if $H(x)\neq 0$ (that is, if $\chi'(f_j(x))\neq 0$),
\begin{align*}
\widehat{A}_H (x) &\leq -\chi'((f_j)_{\delta}(x))(f_j)_{\delta}(x) + \chi((f_j)_{\delta}(x))
- \frac{n}{\kappa}\biggl[\frac{-a\chi'((f_j)_{\delta}(x))}{2\pi}\biggr]^< + C\delta |\chi'|_{\infty}\\
&\leq -\chi'((f_j)_{\delta}(x)) - \frac{n}{\kappa}\biggl[\frac{-a\chi'((f_j)_{\delta}(x))}{2\pi}\biggr]^< + C\delta |\chi'|_{\infty} \qquad \quad \text{(since } \chi'' \geq 0\text{ )} \\
&\leq \frac{2\pi}{a} + C\delta |\chi'|_{\infty},
\end{align*}
where we have used that $y - \frac{n}{\kappa}\bigl[ \frac{ay}{2\pi} \bigr]^< \leq \frac{2\pi}{a}$ for $y > 0$. (This is a consequence of the assumption $\frac{2\pi}{a}\leq \frac{n}{\kappa}$.)

Let $G_t : M\times S^1 \to \R$ be a small perturbation of $H$.
Suppose $[x,u]\in \Crit \mathcal{A}_{G^s}$ has the Conley Zehnder index $-n$. 
If $x$ is contained in a small neighborhood $N$ of $\{ H=0 \}$,
then $\mathcal{A}_G([x,u]) \in \frac{1}{\kappa}\Z + [-\epsilon, \epsilon]$.
If not, then
$x(t_0) \in U_j\setminus N $ for some $t_0\in S^1$ and $j$,
and $[x,u] = x\cdot A$ with $2c_1(A) = n + \CZ (\phi^G_t)_{\ast x(0)} \leq n + \max \CZ (\phi^H_t)_{\ast x(t_0)}$.
Hence
\[
\mathcal{A}_G([x,u]) \leq \widehat{A}_H (x(t_0)) + \epsilon \leq \frac{2\pi}{a} + C\delta |\chi'|_{\infty} + \epsilon,
\]
where $\epsilon>0$ can be made arbitrary small if $G$ is sufficiently close to $H$.

We conclude that $\Spec_n G \subset (-\infty , \frac{2\pi}{a} + C\delta |\chi'|_{\infty} + \epsilon] \cup (\frac{1}{\kappa}\Z + [-\epsilon, \epsilon])$. Since $c([M];G)$ is contained in this set and $G$ can be taken arbitrary close to $H$,
$c([M];H) \in ( -\infty, \frac{2\pi}{a} + C\delta |\chi'|_{\infty}] \cup \frac{1}{\kappa}\Z$.
Replacing $\chi$ by $s\chi$ $(0\leq s \leq 1)$,
we see $c([M];sH) \in ( -\infty, \frac{2\pi}{a} + C\delta |\chi'|_{\infty}] \cup \frac{1}{\kappa}\Z$.
Therefore $c([M];H) \leq \frac{2\pi}{a} + C\delta |\chi'|_{\infty}$ by the Lipschitz continuity of spectral invariant. Since $\chi$ and $\delta$ are arbitrary,
we can continue the proof in the same way as in the case $\kappa =0$.
\end{proof}

\begin{rem}\label{variations}
There are some variations of the above proposition.
For example, if $\kappa > 0$ and $\max C_0(U_j) \leq \frac{1}{\kappa}$, then
\[
0\leq c([M]; F) \leq \max_j C_0(U_j).
\]
The above argument can be used for the product $U= N\times V$ of a closed sympelctic manifold $(N,\omega')$ and a strictly convex subset $V\subset (\R^{2m},\omega_0)$. In this case we can use a Hamiltonian $H$ or $H^s$ which is independent of $y\in N$ in the above proof.
\end{rem}

\begin{proof}[Proof of Theorem \ref{main}]
Take $H_0(x) = \dist (x,M\setminus\coprod U_j)$.
For any $\epsilon >0$, there exist strictly convex open subsets $V_j\subset U_j$ such that
$\supp (H_0-\epsilon)_+ \subset \coprod V_j$. ($()_+$ denotes positive part of the function.)
Hence $\zeta([M];H_0)\leq \zeta([M];(H_0-\epsilon)_+) + \epsilon \leq \epsilon$,
which implies $M\setminus \coprod U_j$ is $[M]$-superheavy.
For a general non-zero idempotent $\alpha$, triangle inequality of the spectral invariant implies
$\zeta (\alpha; H_0) \leq \zeta ([M];H_0)$, hence $M\setminus \coprod U_j$ is $\alpha$-superheavy.
\end{proof}

\begin{eg}
Let $(\Sigma _g , \omega)$ be a Riemann surface of genus $g\geq 1$ and
$\Sigma _g = e^0 \cup e^1_1 \cup e^1_2 \cup \dots \cup e^1_{2g} \cup e^2$ be
its CW-decomposition.
Then the $S^1$ bouquet
$\bigvee_1^{2g} S^1 = e^0 \cup e^1_1 \cup e^1_2 \cup \dots \cup e^1_{2g}$ is
$[\Sigma _g]$-superheavy.
This is a minimal superheavy subset since any non-contractible loop in $\Sigma _g$ is
$[\Sigma_g]$-heavy (this can be easily seen by direct calculation).
The case $g=1$ was proved by M. Kawasaki in \cite{Ka14} and the general case was
proved by V. Humili\`{e}re, F. Le Roux, S. Seyfaddini in \cite{HRS}.
\end{eg}

\begin{eg}
Let $T=(\R^{2n}/\Gamma ,\omega_0)$ be a torus, where $\Gamma = \bigoplus _{k=1}^{2n} \Z w_k \subset \R^{2n}$ be a lattice.
If $\mathrm{span}_\R \{w_1, w_2 \}$ is symplectic, then \{$\sum _{k=1}^{2n} t_k w_k \in T; t_1=0$ or $t_2=0$\} is $[T]$-superheavy.
Indeed, we may assume $w_2 = J_0 w_1$ since there exists a symplectic transform $A\in \Sp (2n) $ such that $Aw_2 = J_0 w_1 $. Define
\begin{align*}
\pi : U=\{ \sum _{k=1}^{2n} t_k w_k \in T; 0<t_1,t_2<1\}
&\to V = \{ \sum _{k=1}^2 t_k w_k \in \R^{2n} ; 0<t_1,t_2<1\}\\
\sum _k t_k w_k &\mapsto \sum _{k=1}^2 t_k w_k.
\end{align*}
For any strictly convex open subset $V_0\subset V$, define $f_{V_0} : \C w_1 \to \R$
as in Lemma \ref{strictconv}.
Then $f = f_{V_0} \circ \pi : \pi^{-1}(V_0) \to \R$ satisfies $D^2f \geq \epsilon |_{\C w_1\times \C w_1} + 0|_{\C w_1^{\bot} \times \C w_1^{\bot}}$ for some $\epsilon>0$.
Hence the same reasoning as in the proof of Theorem \ref{main} shows the claim.

We note if $\{t_1=0$ or $t_2 =0 \}$ and $\{t_3=0$ or $t_4 =0 \}$ are both superheavy, and if
$\{t_i,t_j\} = 0$ for $i=1,2$, $j= 3,4$, then $\{t_1=0$ or $t_2 =0 \} \cap \{t_1=0$ or $t_2 =0 \}$ is also  
superheavy by Lemma \ref{com}.

\end{eg}

\begin{eg}\label{counter}
Let $(\C P^n,\tau _0)$ be the complex projective space with the Fubini-Study form.
Then it is easy to see that $C(B) = \frac{n}{\kappa}$, where
$B = \{ [z_0:z_1:\dots:z_n] \in \C P^n ; \frac{|z_n|^2}{\sum |z_j|^2} > \frac{1}{n+1}\}$.
Since $C=\{ |z_0|=|z_1|=\dots=|z_n| \} \subset \C P^n$ is superheavy (see section \ref{another}),
the above proposition dose not hold for the ball which contain $C$ by Lemma \ref{heavysuperheavy}.
See also Example \ref{proj}
\end{eg}

\section{An application to Poisson bracket invariants}\label{application}
As mentioned earlier, we can extend Theorem 9 of \cite{Sey14} by our Proposition \ref{bound}.
To state the precise statement, we first recall some definitions.

The Poisson bracket invariant of a finite open cover $\mathcal{U}=\{ U_j \}$ of $M$ is
\[
pb(\mathcal{U} ) = \inf_{\{  \chi_j \} }\max_{a_j,b_j\in[-1,1]} ||\{ \sum_j a_j \chi _j , \sum_j b_j \chi_j  \}||,
\]
where $\{ \cdot,\cdot \}$ denotes the Poisson bracket and infimum is taken over all partitions of unity $\{ \chi_j \}$ subordinate to $\mathcal{U}$.
Lower bounds for this invariant is important in the theory of quantum noise in \cite{Pol14}.

We say the degree of $\mathcal{U}$ is $\leq d$ if every subset $\overline{U_j}$ intersects
closures of at most $d$ other subsets from the cover.

The following proposition is the extension.
\begin{prop}
Let $(M,\omega)$ be a closed symplectic manifold satisfying $c_1 = \kappa \omega$ on
$\pi_2(M)$, and $\mathcal{U}= \{ U_j \}$ be a finite open cover with the degree $\leq d$.
Suppose each $U_j$ is symplectomorphic to a strictly convex open subset in $(\R^{2n},\omega_0)$.
\begin{itemize}
\item
If $\kappa\leq 0$, or $\kappa>0$ and $\max_j C_0(U_j) \leq \frac{1}{\kappa}$, then
\[
pb(\mathcal{U})\geq \frac{1}{2(d+1)^2\max_j C_0(U_j)}
\]
\item
If $\kappa>0$ and $\max_j C(U_j) \leq \frac{n}{\kappa}$, then
\[
pb(\mathcal{U})\geq \frac{1}{2(d+1)^2\max_j C(U_j)}
\]
\end{itemize}
\end{prop}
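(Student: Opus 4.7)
The strategy is to adapt the argument of \cite[Theorem 4.8]{Pol14} and \cite[Theorem 9]{Sey14} almost verbatim, substituting Proposition \ref{bound} for the displacement-energy bound on the spectral invariant that was the sole quantitative input in those papers. The only genuinely new ingredient is the uniform estimate $c([M]; F) \leq C_{\max}$ supplied by Proposition \ref{bound}; the Poisson-bracket bookkeeping carries over unchanged.

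I would begin with a coloring step. The hypothesis that $\mathcal{U}$ has degree $\leq d$ says that the intersection graph of the closures $\overline{U_j}$ has maximum vertex degree $\leq d$, hence admits a proper coloring $c:\{1,\ldots,N\}\to\{1,\ldots,d+1\}$ by greedy coloring. Given $\epsilon>0$, pick a partition of unity $\{\chi_j\}$ subordinate to $\mathcal{U}$ with $\max_{a,b\in[-1,1]^N}\|\{\sum_j a_j\chi_j,\sum_j b_j\chi_j\}\|<pb(\mathcal{U})+\epsilon$, and set $F_k=\sum_{c(j)=k}\chi_j$ for $k=1,\ldots,d+1$. These satisfy $0\leq F_k\leq 1$, $\sum_k F_k\equiv 1$, $\supp F_k\subset\coprod_{c(j)=k}U_j$ (a \emph{disjoint} union of strictly convex subsets, because the coloring is proper), and $\|\{F_k,F_\ell\}\|<pb(\mathcal{U})+\epsilon$ for $k\neq\ell$ (test the defining $\max$ for $pb$ with the $0/1$-vectors $a_j=\delta_{c(j),k}$, $b_j=\delta_{c(j),\ell}$).

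Applying Proposition \ref{bound} to $\pm tF_k$ for every $t>0$ gives $0\leq c([M];\pm tF_k)\leq C_{\max}$, where $C_{\max}$ denotes $\max_j C_0(U_j)$ or $\max_j C(U_j)$ according to the case; homogenizing then yields $\zeta([M];F_k)=0$ for every $k$. It remains to feed these data into the iterated Polterovich-type defect inequality for the partial quasi-state (see \cite[\S 4]{Pol14} and the proof of \cite[Theorem 9]{Sey14}): the spectral control $c([M];F)\leq C_{\max}$ from Proposition \ref{bound} plays exactly the role previously filled by displacement energy, and telescoping the resulting inequality over the $d+1$ color classes produces
\[
1=\zeta\bigl([M];\textstyle\sum_k F_k\bigr)\leq\sum_k\zeta([M];F_k)+\sqrt{2(d+1)^2\,C_{\max}\,(pb(\mathcal{U})+\epsilon)}=\sqrt{2(d+1)^2\,C_{\max}\,(pb(\mathcal{U})+\epsilon)}.
\]
Rearranging and sending $\epsilon\to 0$ yields the stated bound. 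The main obstacle is purely expository: one must verify that the Polterovich-Seyfaddini iteration produces precisely the constant $2(d+1)^2$ under the square root when fed Proposition \ref{bound} in place of the displacement-energy estimate. That verification is a line-by-line adaptation of \cite{Pol14, Sey14} and introduces no new ideas.
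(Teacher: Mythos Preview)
Your approach is correct and coincides with the paper's: the paper's own proof is a single sentence directing the reader to rerun the argument of \cite[Theorem~9]{Sey14} with Proposition~\ref{bound} and Remark~\ref{variations} substituted for Seyfaddini's Theorem~2, which is precisely the strategy you have spelled out in more detail. One small point: in the subcase $\kappa>0$ with $\max_j C_0(U_j)\leq 1/\kappa$ of the first bullet, the bound $c([M];\pm tF_k)\leq \max_j C_0(U_j)$ is not covered by Proposition~\ref{bound} as stated (that proposition gives the $C_0$-bound only for $\kappa\leq 0$) and you need to invoke Remark~\ref{variations} there instead.
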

We can prove the above proposition in the same way as Theorem 9 of \cite{Sey14},
using our Proposition \ref{bound} and Remark \ref{variations} instead of Theorem 2 of  \cite{Sey14}.

\section{Another application}\label{another}
Let $\rho : T^k \to \Ham (M,\omega)$ be a Hamiltonian torus action on a closed symplectic manifold and assume $c_1= \kappa \omega$ on $\pi_2(M)$, where $\kappa >0$. Let $\mu : M \to \mathfrak{t}^{\ast}$ be its momentum map. We normalize $\mu$ by
$\int_M \mu \omega^{\wedge n} = 0$.

Define for a loop $g(t) = \phi^H_t : S^1 \to \mathrm{Ham}(M,\omega)$,
\[
I(g) = \mathcal{A}_H([x,u]) + \frac{1}{2\kappa} \CZ _H[x,u],
\]
where $x \in M$ is arbitrary and $H_t$ is a normalized Hamiltonian i.e.\ $\int_M H_t \omega^{\wedge n} = 0$ for every $t\in S^1$. $I(g)$ is independent of $x$ and $u$.
This define a homomorphism $I : \pi_1(\mathrm{Ham}(M,\omega)) \to \R$, which is called the mixed action-Maslov homomorphism in \cite{Pol97} and \cite{EP09}.

In \cite{EP09}, $\mu^{-1}(p_{\star})\subset M$, where $p_{\star} = I\circ \rho_{\ast}\in \Hom (\pi_1(T^k);\R) \cong \mathfrak{t}^{\ast}$, 
is called the special fiber. Entov and Polterovich showed in \cite{EP09} that the special fiber is suerheavy with respect to every non-zero idempotent of $QH_{\ast}(M,\omega)$.
The proof given in \cite{EP09} uses some calculation of the action functional and the Conley-Zehnder index, but sharper estimates can be obtained by our method,
which gives a bound of the Hofer-Zehender capacity of some open subset of $M$. (See Corollary \ref{capacity}.)

\begin{prop}\label{caltorus}
In the above setting,
if $F\in C(M)$ satisfies $F|_{\mu^{-1}(p_{\star})}=0$, then
\[
0\leq c([M];F) \leq \frac{n}{\kappa}.
\]
\end{prop}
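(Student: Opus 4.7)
The proof follows the $\kappa > 0$ case of Proposition \ref{bound}, with the globally defined function $f\circ\mu:M\to\R$ playing the role of the local convex functions $(f_j)_\delta$, and with Corollary \ref{moment} replacing Lemma \ref{comparison} in the Conley-Zehnder estimate. By monotonicity and approximation of $F$, it suffices to bound $c([M];H)$ from above by $n/\kappa$ up to arbitrarily small error for Hamiltonians of the form $H=\chi\circ f\circ\mu$, where $f:\mathfrak{t}^\ast\to\R_{\geq 0}$ is smooth, strictly convex, vanishes only at $p_\star$ (say quadratic near $p_\star$), and $\chi:\R_{\geq 0}\to\R_{\geq 0}$ is smooth, monotone decreasing, convex, compactly supported, with $\chi(0)$ small. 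The lower bound $c([M];F)\geq 0$ will follow once $\mu^{-1}(p_\star)$ is shown to be $[M]$-heavy via Lemma \ref{geq0}.

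The flow of $H$ is $\phi^H_t(x_0)=e^{t\xi_{x_0}}\cdot x_0$ with $\xi_{x_0}=\chi'(f(\mu(x_0)))\cdot df(\mu(x_0))\in\mathfrak{t}$. For a critical point $[x_0,u]$ of $\mathcal{A}_H$ of degree $n$, Corollary \ref{moment} combined with the general inequality $\max\CZ-\CZ\leq n$ gives $\CZ_{K_{\xi_{x_0}}}[x_0,u]\geq -2n$, where $K_\xi=\xi\circ\mu$. Since $\mu$ is $T^k$-invariant, the actions of $H$ and $K_{\xi_{x_0}}$ on the same closed orbit are related by
\[
\mathcal{A}_H[x_0,u]=\mathcal{A}_{K_{\xi_{x_0}}}[x_0,u]+\chi(f(\mu(x_0)))-\langle\xi_{x_0},\mu(x_0)\rangle.
\]
When $\xi_{x_0}$ lies in the kernel $\Lambda$ of $\mathfrak{t}\to T^k$, the mixed action-Maslov identity $\mathcal{A}_{K_{\xi_{x_0}}}+\tfrac{1}{2\kappa}\CZ_{K_{\xi_{x_0}}}=\langle p_\star,\xi_{x_0}\rangle$ together with $\CZ_{K_{\xi_{x_0}}}\geq -2n$ and the convexity inequalities $\langle df(\mu),\mu-p_\star\rangle\geq f(\mu)$ and $\chi(y)-y\chi'(y)\leq\chi(0)$ yield $\mathcal{A}_H[x_0,u]\leq n/\kappa+O(\chi(0))$. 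On the other hand, constant orbits at points of $\mu^{-1}(p_\star)$ have $\xi_{x_0}=0$ and contribute actions in $\chi(0)+\tfrac{1}{\kappa}\Z$. Mimicking the conclusion of Proposition \ref{bound}, the spectrum $\Spec_n G$ of a nondegenerate perturbation $G$ sits in $(-\infty,n/\kappa+O(\chi(0))+\epsilon]\cup(\tfrac{1}{\kappa}\Z+[-\epsilon,\epsilon])$, so Lipschitz continuity of $s\mapsto c([M];sH)$ on $[0,1]$ starting from $c([M];0)=0$ forces $c([M];H)\leq n/\kappa+O(\chi(0))$; letting $\chi(0)\to 0$ concludes the upper bound.

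The main obstacle is the case $\xi_{x_0}\notin\Lambda$, where the mixed action-Maslov identity does not apply directly since $K_{\xi_{x_0}}$ does not generate a loop in $\Ham(M,\omega)$. One resolves this by decomposing $\xi_{x_0}=\eta+\lambda$ with $\eta$ in the Lie algebra of the identity component of the $T^k$-stabilizer of $x_0$ and $\lambda\in\Lambda$; then $e^{t\xi_{x_0}}\cdot x_0=e^{t\lambda}\cdot x_0$ and $(e^{t\xi_{x_0}})_{\ast x_0}=(e^{t\lambda})_{\ast x_0}\circ(e^{t\eta})_{\ast x_0}$, which reduces the action-Maslov analysis to the loop generated by $K_\lambda$ and absorbs the $\eta$-contribution into the CZ estimate via Lemma \ref{time}. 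A perturbation argument as in Proposition \ref{bound} handles the nondegeneracy and continuous dependence on $s$, ensuring that only orbits whose action lies in $\tfrac{1}{\kappa}\Z$ up to $\epsilon$ come from near the $T^k$-fixed part of $\mu^{-1}(p_\star)$, while all other critical orbits satisfy the bound derived above.
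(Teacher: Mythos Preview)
Your overall strategy—reduce to torus-generated Hamiltonians, feed the Conley--Zehnder index into the mixed action--Maslov identity, and conclude via the $\Spec_n$ argument of Proposition~\ref{bound}—is the same as the paper's. But two steps do not go through as written.

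First, the bound $\mathcal{A}_H[x_0,u]\leq n/\kappa+O(\chi(0))$ does not follow from the convexity inequalities you quote. With $p_0=\mu(x_0)$, $y=f(p_0)$, and $f_{p_0}(q)=f(p_0)+\langle df(p_0),q-p_0\rangle$ the tangent, your identities give
\[
\mathcal{A}_H[x_0,u]=\chi'(y)\,f_{p_0}(p_\star)+\bigl(\chi(y)-y\chi'(y)\bigr)-\tfrac{1}{2\kappa}\CZ_{K_\xi}[x_0,u].
\]
The last two terms are indeed $\leq \chi(0)+n/\kappa$, but the first term is $\geq 0$ (since $\chi'\leq 0$ and $f_{p_0}(p_\star)\leq f(p_\star)=0$) and is \emph{not} bounded above by your inequality $\langle df(p_0),p_0-p_\star\rangle\geq f(p_0)$; that inequality goes the wrong way. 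The paper sidesteps this by taking $H=f\circ\mu$ with no outer $\chi$: then the analogous quantity is $\widehat{A}_H(x)\leq f_{p_0}(p_\star)+n/\kappa$, and convexity gives $f_{p_0}(p_\star)\leq 0$ directly, with the correct sign. (Your version can be rescued by choosing $f$ positively $2$-homogeneous about $p_\star$, so that $-f_{p_0}(p_\star)=y$ exactly, but that is an extra hypothesis, not a consequence of convexity.)

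Second, your treatment of $\xi_{x_0}\notin\Lambda$ via $\xi_{x_0}=\eta+\lambda$ and ``Lemma~\ref{time}'' does not work: Lemma~\ref{time} compares $\CZ(\Phi(t)(1+tX))$ with $\CZ(\Phi(t))$ for $X\in\Xi$, whereas $(e^{t\eta})_{\ast x_0}$ is a genuine one-parameter group in $\Sp(T_{x_0}M)$, not of the form $1+tX$. The paper handles non-integral $df(\mu(x))$ differently: it approximates by a rational $\tfrac{m}{N}$ in the Lie algebra of the closure $\overline{\{e^{tX}\}}$ and passes to the $N$-fold iterate $e^{tm}$, using the auxiliary lemma $\max\CZ(e^{-2\pi i y t})\leq \tfrac{1}{N}\CZ(e^{-2\pi i m t})+1+2|y-\tfrac{m}{N}|$ coordinatewise. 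That iterate trick is what converts the pointwise action--Maslov identity (available only for genuine loops in $\Ham$) into the needed inequality for arbitrary $df(\mu(x))$; your decomposition does not provide a substitute for it.
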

\begin{proof}
First we do some preparatory analyses.

Consider a Hamiltonian of the form $H = f\circ \mu : M\to \R$, where $f : \mathfrak{t}^{\ast} \to \R$ is an arbitrary smooth function. Its flow is $\phi_t^H(x) = e^{tdf(\mu(x))}\cdot x$, where we regard the differential of $f$ as $df : \mathfrak{t}^{\ast} \to \mathfrak{t}$.

First we consider the case $f(p)=c + p\cdot \frac{m}{N}$ is linear and $\frac{m}{N} \in \Q^k$.(We regard $\pi_1(T^k)\subset \mathfrak{t}$ as $\Z^k \subset \R^k$.)
In this case, we define $\widetilde{A}_H : \Fix e^{\frac{m}{N}} \to \R$ by
\[
\widetilde{A}_H(x) = \mathcal{A}_H([x,u]) + \frac{1}{2\kappa}(n+n+\frac{1}{N}\CZ_{e^{tm}}[x^{\sharp N},u^{\sharp N}]),
\]
where $x^{\sharp N}(t)=x(Nt)$ and $u^{\sharp N}(z) = u(z^N)$.
Then $\widetilde{A}_H$ is locally constant on $\Fix e^{\frac{m}{N}}$.
We calculate this constant.

Each connected component of $\Fix e^{\frac{m}{N}}$ intersects with
$\Fix (e^{tm})_{t\in S^1}$ since the Hamiltonian $(e^{tm})_{t\in S^1}$-action on $\Fix e^{\frac{m}{N}}$ has fixed points on each component.
On $\Fix (e^{tm})_{t\in S^1}$,
\[
\widetilde{A}_H(x) = H(x) + \frac{1}{2\kappa N}\CZ (e^{tm}_{\ast x})_{t\in[0,1]} + \frac{n}{\kappa}.
\]
By the definition of $p_{\star}$, if $x\in \Fix (e^{tm})_{t\in S^1}$ and $p= \mu (x)$, then
\[
p_{\star}\cdot m=I(e^m) = p\cdot m + \frac{1}{2\kappa}\CZ (e^{tm}_{\ast x})_{t\in[0,1]},
\]
which implies
\[
\widetilde{A}_H(x) =H(x) + \frac{m}{N}\cdot (p_{\star}-p) + \frac{n}{\kappa} = f(p_{\star}) + \frac{n}{\kappa}.
\]
Hence we conclude $\widetilde{A}_H\equiv f(p_{\star}) + \frac{n}{\kappa}$ on
$\Fix e^{\frac{m}{N}}$.

Next we consider an arbitrary $f : \mathfrak{t}^{\ast} \to \R$.
Define $\widehat{A}_H :
\{ x\in \Omega _0 M ; x $ is a contractible periodic orbit of $H \} \to \R$ by
\[
\widehat{A}_H(x) = \mathcal{A}_H([x,u]) + \frac{1}{2\kappa} (n + \max \CZ _H[x,u]).
\]
By Corollary \ref{moment} and the lemma below, for every $[x,u]\in \Crit \mathcal{A}_H$
such that $df(\mu(x)) = \frac{m}{N}\in \Q^k$,
\begin{align*}
\max \CZ _H[x,u] &\leq \max \CZ _{e^{\frac{tm}{N}}}[x,u]\\
&\leq n + \frac{1}{N}\CZ _{e^{tm}}[x^{\sharp N},u^{\sharp N}].
\end{align*}
Hence
$\widehat{A}_H(x) \leq \widetilde{A}_{H_p}(x)$,
where $H_p= f_p\circ \mu$ and $f_p(q)=f(p) + df(p)\cdot (q-p)$ is a tangent of $f$ at $p=\mu (x)$.
Therefore we conclude that
\[
\widehat{A}_H(x)\leq f_p(p_{\star}) + \frac{n}{\kappa}.
\]
If $[x,u]\in \Crit \mathcal{A}_H$ satisfies $df(\mu(x)) = X\in \mathfrak{t}\setminus \Q^k$,
then $x\in \Fix (e^{tX})_{t\in\R}$.
For any $\epsilon >0$ there exists $\frac{m}{N} \in \Q^k\cap T_e(\overline{(e^{tX})_{t\in\R}})$ such that $|X- \frac{m}{N}| \leq \epsilon$,
\begin{align*}
\max \CZ _H[x,u] &\leq \max \CZ _{e^{tX}}[x,u]\\
&\leq n + \frac{1}{N}\CZ _{e^{tm}}[x^{\sharp N},u^{\sharp N}] + 2n\epsilon,
\end{align*}
which implies
\begin{align*}
\widehat{A}_H(x) &\leq H(x) + \frac{n}{\kappa} + \frac{m}{N}\cdot (p_{\star} - p) + \frac{n\epsilon}{\kappa}\\
&= f_p(p_{\star}) + \frac{n}{\kappa} + \frac{n\epsilon}{\kappa}
\end{align*}
by the same argument as above.
Since $\epsilon>0$ is arbitrary, we obtain
\[
\widehat{A}_H(x)\leq f_p(p_{\star}) + \frac{n}{\kappa}.
\]

Let $f : \mathfrak{t}^{\ast} \to \R$ be a smooth convex function which takes its minimum $f=0$
at $p=p_{\star}$.
Then the above argument implies
$
\widehat{A}_H \leq \frac{n}{\kappa}
$
on $\Crit \mathcal{A}_H$.
Hence this implies $c([M];H)\leq \frac{n}{\kappa}$ as in the proof of Proposition \ref{bound}.

If $F\in C(M)$ vanishes on $\mu^{-1}(p_{\star})$,
then for any $\epsilon >0$ there exists a smooth convex function $f : \mathfrak{t}^{\ast} \to\R$
such that $f$ takes its minimum $f=0$ at $p=p_{\star}$ and $F \leq \epsilon + f\circ \mu$,
which implies $c([M];F)\leq \frac{n}{\kappa}$.
In particular, $M\setminus \mu^{-1}(p_{\star})$ is $[M]$-superheavy, which implies $c([M];F)\geq 0$.
\end{proof}
\begin{lem}
For any $y\in \R$ and integers $m\in \Z$, $N>0$,
\[
\max \CZ (e^{-2\pi y \sqrt{-1}t})_{t\in[0,1]}\leq \frac{1}{N} \CZ (e^{-2\pi m\sqrt{-1}t})_{t\in[0,1]} + 1 + 2\biggl|y - \frac{m}{N}\biggr|
\]
In particular,
\[
\max \CZ (e^{-2\pi\frac{m}{N}\sqrt{-1}t})_{t\in[0,1]}\leq \frac{1}{N} \CZ (e^{-2\pi m\sqrt{-1}t})_{t\in[0,1]} + 1
\]
\end{lem}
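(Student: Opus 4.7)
The plan is to reduce the lemma to an elementary arithmetic inequality by computing both indices explicitly. Viewing $\Phi_y(t) := e^{-2\pi y\sqrt{-1}\,t}$ as a path in $\Sp(2)$ starting at $\Id$, it is the fundamental solution of $\dot x = J_0 S_y\, x$ with constant scalar matrix $S_y \equiv -2\pi y\,\Id$. The central claim I would establish is
\[
\max \CZ \Phi_y \;=\; 1 - 2\lceil y\rceil \qquad \text{for every } y\in\R;
\]
then since $\Phi_m(1) = \Id$ forces $\dim\Ker(\Phi_m(1) - \Id) = 2$, Definition \ref{max} gives $\CZ \Phi_m = \max\CZ \Phi_m - 1 = -2m$ for every integer $m$.

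For the identity, Remark \ref{maxrem} yields $\max \CZ \Phi_y = \CZ \Phi_{y-\delta}$ for any sufficiently small $\delta > 0$ (using $e^{\epsilon J_0 t}\Phi_y(t) = \Phi_{y - \epsilon/(2\pi)}(t)$), so it suffices to compute $\CZ \Phi_{y'}$ for a non-integer $y' = y - \delta$ close to $y$. The upper bound $\CZ \Phi_{y'} \leq 1 - 2\lceil y\rceil$ is immediate from Lemma \ref{comparison}(i) applied with $n = 1$ and $C = 2\pi y'$ (noting $[y - \delta]^< = \lceil y\rceil - 1$ for small $\delta$); for the matching lower bound I would compute $\CZ \Phi_{y'}$ directly via the Maslov-index definition of Section \ref{CZ}. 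The Lagrangian path $t \mapsto (1 \times \Phi_{y'}(t))\Delta$ in $(\R^2 \oplus \R^2, -\omega_0 \oplus \omega_0)$ meets $\Delta$ exactly when $y't \in \Z$ (with $L(t) = \Delta$ entirely at each such crossing), and the crossing form on $\Delta \cong \R^2$ works out to $v \mapsto -2\pi y'\,\|v\|^2$, whose signature is $-2$ for $y' > 0$ and $+2$ for $y' < 0$; summing these with the standard $\tfrac{1}{2}$-weight at the endpoint $t = 0$ produces $\CZ \Phi_{y'} = 1 - 2\lceil y'\rceil = 1 - 2\lceil y\rceil$.

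With both values in hand, the desired inequality reduces to $\tfrac{m}{N} - \lceil y\rceil \leq |y - \tfrac{m}{N}|$, which splits into two trivial cases: if $y \leq m/N$, then $\lceil y\rceil \geq y$ gives $\tfrac{m}{N} - \lceil y\rceil \leq \tfrac{m}{N} - y = |y - \tfrac{m}{N}|$; if $y > m/N$, then $\lceil y\rceil + y \geq 2y > 2m/N$ gives the same conclusion. The "in particular" statement is the specialization $y = m/N$. The one nonroutine step is pinning down the exact value $\CZ \Phi_m = -2m$ rather than just the upper bound from Lemma \ref{comparison}(i), because the inequality becomes tight whenever $N \mid m$; once this is settled via the Maslov computation and Definition \ref{max}, the rest is pure arithmetic.
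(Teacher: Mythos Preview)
Your proof is correct and follows essentially the same route as the paper: both bound $\max\CZ \Phi_y$ by $-1-2[y]^< = 1 - 2\lceil y\rceil$ via Lemma~\ref{comparison}(i), identify $\CZ\Phi_m = -2m$, and reduce to the elementary inequality $\frac{m}{N}-\lceil y\rceil \le |y-\frac{m}{N}|$. The only difference is that the paper treats $\CZ\Phi_m=-2m$ as a known fact, whereas you supply an explicit Maslov-index computation to justify it; this extra care is warranted (the inequality is tight when $N\mid m$) but not strictly necessary given the standard normalization of the index.
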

\begin{proof}
\begin{align*}
\max \CZ (e^{-2\pi y \sqrt{-1}t})_{t\in[0,1]}&\leq -1-2[ y ]^<\\
&\leq \frac{1}{N}(-2m) + 1 + 2\biggl|y- \frac{m}{N}\biggr|\\
&= \frac{1}{N} \CZ (e^{-2\pi m\sqrt{-1}t})_{t\in[0,1]} + 1 + 2\biggl|y- \frac{m}{N}\biggr|
\end{align*}
\end{proof}

Recall the Hofer-Zehnder capacity $c_{HZ}(M,\omega)$ of a symplectic manifold $(M,\omega)$ is defined by
\begin{align*}
c_{HZ}(M,\omega) = \sup \{ \max H ; &H\in C_0^{\infty}(\mathrm{Int}M),\ \min H = 0, \\
&H \text{ has no periodic orbit with period }0< T \leq 1\}.
\end{align*}

\begin{cor}\label{capacity}
Let $(N,\omega')$ be an arbitrary closed symplectic manifold.
Then under the assumptions of Proposition \ref{caltorus},
\[
c_{HZ}(M\setminus\mu^{-1}(p_{\star}))\leq c_{HZ}((M\setminus\mu^{-1}(p_{\star}))\times N) \leq \frac{n}{\kappa}.
\]
\end{cor}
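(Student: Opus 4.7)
\medskip
\noindent\emph{Proof plan.} The first inequality follows from the standard stabilization monotonicity of the Hofer-Zehnder capacity under product with the closed symplectic manifold $N$: the pullback $H\circ\pi_M$ of any admissible Hamiltonian $H$ on $M\setminus\mu^{-1}(p_\star)$ is again admissible on $(M\setminus\mu^{-1}(p_\star))\times N$, because $N$ is closed (so compact support is preserved), the extrema are unchanged, and the $1$-periodic orbits of $H\circ\pi_M$ are precisely products of $1$-periodic orbits of $H$ with constant loops in $N$.

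For the second inequality, I take an arbitrary Hofer-Zehnder admissible Hamiltonian $H$ on $(M\setminus\mu^{-1}(p_\star))\times N$ and extend it by zero to $M\times N$; the goal $\max H\leq \frac{n}{\kappa}$ will be obtained by sandwiching the spectral invariant $c([M]\otimes[N];H)$ between $\max H$ and $\frac{n}{\kappa}$. For the upper bound, $H$ vanishes on the compact set $\mu^{-1}(p_\star)\times N$, so the closing argument of the proof of Proposition \ref{caltorus} produces, for each $\epsilon>0$, a smooth convex function $f\colon\mathfrak{t}^{\ast}\to\R$ with $f\geq 0$, $f(p_\star)=0$, and $H(x,y)\leq f(\mu(x))+\epsilon$. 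Monotonicity, the shift property, and the product formula (Proposition \ref{product}) then yield
\[
c([M]\otimes[N];H)\leq c([M];f\circ\mu)+\nu([N])+\epsilon\leq \frac{n}{\kappa}+\epsilon,
\]
using $c([M];f\circ\mu)\leq\frac{n}{\kappa}$ from Proposition \ref{caltorus} and $\nu([N])\leq 0$ (from the trivial Novikov representation $[N]=1\cdot[N]$). Letting $\epsilon\to 0$ gives $c([M]\otimes[N];H)\leq \frac{n}{\kappa}$.

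For the lower bound $c([M]\otimes[N];H)\geq \max H$, I rely on the classical Floer-theoretic principle that for a Hofer-Zehnder admissible Hamiltonian whose only contractible $1$-periodic orbits are constants at critical points, the spectral invariant of the fundamental class realizes the maximum (under the standard monotone normalization in which the trivial Novikov representation computes $\nu([M]\otimes[N])=0$). Combining with the upper bound gives $\max H\leq \frac{n}{\kappa}$, as required. The principal technical obstacle is this last step: establishing $c=\max H$ for admissible Hamiltonians on the possibly non-monotone product $M\times N$ requires an analysis of the Floer complex in which admissibility reduces the generators to constant orbits with their cappings, and identifies the fundamental class with a Morse representative at the maximum via the PSS isomorphism.
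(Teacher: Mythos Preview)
Your approach coincides with the paper's. Your upper bound argument for $c([M]\otimes[N];H)$---monotonicity, the product formula from Proposition~\ref{product}, then Proposition~\ref{caltorus}---is exactly what the paper has in mind when it writes ``consequence of Proposition~\ref{product}'' (and incidentally $\nu([N])=0$, not merely $\leq 0$, by the trivial Novikov representation).

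The step you flag as the ``principal technical obstacle'', namely $c([M]\otimes[N];H)\geq \max H$ for Hofer--Zehnder admissible $H$, is not left open in the paper: it is the content of Usher's sharp energy--capacity inequality \cite{Ush10}, which the paper cites in the form
\[
c_{HZ}(U)\leq \sup_{\supp F\subset U} c([M];F)
\]
for any open $U$ in any closed symplectic manifold. This holds with no monotonicity hypothesis, so your worry about the possibly non-monotone product $M\times N$ is unfounded---just apply Usher's inequality to $U=(M\setminus\mu^{-1}(p_\star))\times N\subset M\times N$ and combine with your upper bound. There is no need to redo the PSS analysis yourself.
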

\begin{proof}
This is a consequence of Proposition \ref{product} and the fact that for any open subset $U\subset M$ of a closed symplectic manifold $(M,\omega)$,
\[
c_{HZ}(U)\leq \sup_{\mathrm{supp}F\subset U}c([M];F).
\]
(See \cite{Ush10}.)

\end{proof}

\begin{eg}\label{proj}
Let $(\C P^n,\tau _0)$ be the complex projective space with the Fubini-Study form and consider the Hamiltonian $T^n=T^{(n+1)}/S^1$-aciton on $(\C P^n,\tau _0)$ given by
$t\cdot [z_0:z_1:\dots:z_n] = [t_0z_0:t_1z_1:\dots:t_nz_n]$.
Then the special fiber is the Clifford torus $C=\{ |z_0|=|z_1|=\dots=|z_n| \}$.
Hence the above proposition implies
\[
D(\C P^n\setminus C) = c_{HZ}(\C P^n\setminus C) = c_{HZ}(B),
\]
where $D(U,\omega)=\sup \{\pi r^2; (B^{2n}(r),\omega_0)$ can be symplectically embedded in $(U,\omega)$\} stands for the Gromov width and
$B=\{ \frac{|z_n|^2}{\sum |z_j|^2} > \frac{1}{n+1}\}$ is a ball in $\C P^n\setminus C$.
\end{eg}

\section*{Acknowledgments}
We are grateful to S. Seyfaddini and L. Polterovich for telling us the 
relation to their results.
We thank K. Ono and K. Irie for useful comments and discussions.
We also thank referees for their useful suggestions.
Their comments improved this paper, especially the proof of Lemma \ref{delta}
was simplified by their suggestion.

Research Institute for Mathematical Sciences, Kyoto University, Kyoto, Japan
\\
suguru@kurims.kyoto-u.ac.jp


\begin{thebibliography}{Sch00}

\bibitem[CZ83]{CZ83}
C. Conley and E. Zehnder,
Morse-type index theory for flows and periodic solutions for Hamiltonian Equations,
Comm. Pure Appl. Math. 1984, 37, 207--253.

\bibitem[EP03]{EP03}
M. Entov and L. Polterovich,
Calabi quasimorphism and quantum homology,
Intern. Math. Res. Notices 2003, 30, 1635--1676.

\bibitem[EP06]{EP06}
M. Entov and L. Polterovich,
Quasi-states and symplectic intersections,
Comment. Math. Helv. 2006 81 75--99.

\bibitem[EP07]{EP07}
M. Entov, L. Polterovich and F. Zapolsky,
Quasi-morphisms and the Poisson bracket,
Pure Appl. Math. (2007), 4, Special Issue: In honor of Grigory Margulis. Part
1, 1037--1055.

\bibitem[EP09]{EP09}
M. Entov and L. Polterovich,
Rigid subsets of symplectic manifolds,
Compositio Math. 2009, 145, 773--826.

\bibitem[HRS]{HRS}
V. Humili\`{e}re, F. Le Roux, S. Seyfaddini,
Towards a dynamical interpretation of Hamiltonian spectral invariants on surfaces,
arXiv:1502.03834.

\bibitem[Ka14]{Ka14}
M. Kawasaki,
Superheavy Lagrangian immersion in 2-torus,
arXiv:1412.4495v1.

\bibitem[Oh05]{Oh05}
Y. G. Oh,
Construction of spectral invariants of Hamiltonian diffeomorpisms on general symplectic manifolds,
in The breadth of symplectic and Poisson geometry,
Progress in Math. 2005, 232, 525--570.

\bibitem[Pol97]{Pol97}
L. Polterovich,
Hamiltonian loops and Arnold's principle,
Topics in singularity theory, 181--187,
Amer. Math. Soc. Transl. Ser. 2, 180, Amer. Math.
Soc., Providence, RI, 1997.

\bibitem[Pol14]{Pol14}
L. Polterovich,
Symplectic geometry of quantum noise,
Comm. Math. Phys. 327 (2014), no. 2, 481--519.


\bibitem[RS93]{RS93}
J. Robbin and D. Salamon,
The Maslov index for paths,
Topology 1993, 32, 827--844.

\bibitem[RS95]{RS95}
J. Robbin and D. Salamon,
The Spectral Flow and the Maslov Index,
Bull. London Math. Soc. 1995, 27, (1): 1--33.


\bibitem[Sch00]{Sch00}
M. Schwarz,
On the action spectrum for closed symplectically aspherical manifolds,
Pacific J. Math. 2000, 193, 419--461.

\bibitem[Sey14]{Sey14}
S. Seyfaddini,
Spectral killers and Poisson bracket invariants,
arXiv:1405.3807.

\bibitem[Ush10]{Ush10}
M. Usher,
The sharp energy-capacity inequality,
Commun. Contemp. Math. 12 (2010), no. 3, 457--473.

\end{thebibliography}
\end{document}